\documentclass[reqno]{amsart}
\usepackage[english]{babel}
\usepackage[T1]{fontenc}
\usepackage[utf8]{inputenc}
\usepackage{amsmath}
\usepackage{amsfonts}
\usepackage{array}
\usepackage{eurosym}

\usepackage{amssymb}
\usepackage{mathrsfs}
\usepackage{amsthm}
\usepackage{xfrac}
\usepackage{lmodern}
\usepackage{fix-cm}
\usepackage{listings}
\usepackage{fancyvrb}
\usepackage{enumerate}   
\usepackage{xcolor}
\usepackage{pgf,tikz}
\usepackage{mathrsfs}
\usetikzlibrary{arrows}
\usepackage{float}
\usepackage{subcaption}
\usepackage{hyperref}{}
\usetikzlibrary{patterns}

\usepackage{wasysym}
\usepackage{enumitem}

\theoremstyle{plain}
\newtheorem{theorem}{Theorem}[section]
\newtheorem{lemma}[theorem]{Lemma}
\newtheorem{proposition}[theorem]{Proposition}
\newtheorem{corollary}[theorem]{Corollary}

\theoremstyle{definition}
\newtheorem{definition}[theorem]{Definition}
\newtheorem{example}[theorem]{Example}

\newtheorem{algorithm}[theorem]{Algorithm}
\theoremstyle{remark}
\newtheorem{remark}[theorem]{Remark}

\def\NZQ{\Bbb}               % the font for N,Z,Q,R,C
\def\NN{{\NZQ N}}

\def\FF{{\mathcal F}}

\def\FF{{\mathcal F}}

\def\MM{{\mathrm m}}

\def\mm{{\frak m}}
\def\nn{{\frak n}}

\newcommand{\PP}{\mathcal{P}}

\newcommand \ini{\operatorname{in}}

\newcommand{\rad}{1.5 pt}

\newcommand{\cmm}{c_G(\mm)}

\begin{document}
		\title{Diagonal F-threshold of binomial edge ideals}
		\author[Giancarlo Rinaldo]{Giancarlo Rinaldo}
\email{giancarlo.rinaldo@unime.it}
\address{Department	of Mathematics, Informatics, Physics and Earth Science, University of Messina, Viale F. Stagno d’Alcontres, 31, Messina, 98166, Italy}
	\author[Francesco Romeo]{Francesco Romeo}
	\email{francesco.romeo@unicas.it}
	\address{Department of Electrical and Information Engineering, University of Cassino and Southern Lazio, Via G. Di Biasio 43, 03043 Cassino, Italy.}

\begin{abstract}
	We compute the diagonal $F$-threshold of binomial edge ideals associated to graphs in the case of cycles, cycles with whiskers, bipartite graphs, and block graphs. Moreover, we provide general bounds for any graph.
\end{abstract}

\maketitle
		
\section{Introduction}
The theory of $F$-thresholds provides a fundamental tool for studying singularities in rings of positive characteristic. Introduced by Mustaţă, Takagi and Watanabe in the setting of regular local rings \cite{Musta2004FthresholdsAB}, the notion of $F$-threshold was conceived as a characteristic $p$ counterpart of the log-canonical threshold \cite{TAKAGI2004278}. Later, Huneke, Mustaţă, Takagi and Watanabe generalized this invariant and established connections with tight closure theory, integral closure and Hilbert--Samuel multiplicities \cite{HMTW2008}.

Several explicit computations of $F$-thresholds have subsequently appeared in the literature. Matsuda, Ohtani and Yoshida determined formulas for binomial hypersurfaces in \cite{doi:10.1080/00927870903107940}, while Chiba and Matsuda investigated the case of Hibi rings, namely graded toric rings arising from finite distributive lattices, in \cite{CM2015}. Moreover, De Stefani, Nuñez-Betancourt and Pérez proved the existence of $F$-thresholds for arbitrary Noetherian rings in \cite{destefani2017existence}. Despite these developments, obtaining explicit values of $F$-thresholds remains difficult in many relevant classes of ideals and rings.

The case of determinantal ideals has been studied in \cite{BMRS} and \cite{BRRS}. In this work, our attention is devoted  to the diagonal $F$-threshold associated with binomial edge ideals. Binomial edge ideals associated to simple graphs have been intensively studied in the last decade (see \cite{HHO-book}, Chapter 7). Their algebraic and homological properties are intimately related to the combinatorics of the underlying graph. Let $G$ be a simple graph (that is, undirected, with no loops, and no multiple edges) on
the vertex set $[n] := \{1, 2, \ldots , n\}$ and $S = K[x_1,\ldots, x_n , y_1 ,\ldots,y_n]$ the polynomial ring in $2n$ variables. The binomial edge ideal $J_G\subset S$ of $G$ is generated by all the binomials of
the form $f_{ij} = x_i y_j - x_j y_i$ where $\{i, j\}$ is an edge of $G$. In other words, $J_G$ is generated by the 2-minors of the generic matrix 
\[
M=\begin{pmatrix}
	x_1 &\ldots & x_n\\
	y_1 &\ldots & y_n
\end{pmatrix},
\]	
which correspond to the edges of $G$ (see \cite{HHHKR}, \cite{MO}, and \cite{HHO-book}). We recall the definition of $F$-threshold. Let $R$ be a ring of characteristic $p > 0$ and $I, J \subseteq R$ two
ideals such that $I \subseteq \sqrt{J}$. For a fixed positive integer $e$ we define the finite integer
\[
v_I^J(p^e):=\max\{r \in \NN | I^r\not\subseteq J^{[p^e]}\}.
\]
The $F$-threshold of $I$ with respect to $J$ is the limit
\[
c^J(I) : = \lim_{e\to \infty}\frac{v_I^J(p^e)}{p^e}
\]
Following the arguments of \cite{BMRS}, we can consider any positive integer $k$ as the argument of the $v$ function.
In our setting, if $G$ is a graph then for any $k \in \NN$ we call 
\[
v_G(k):=\max\{r \in \NN \ | \ \mm^r\not\subseteq (\mm^{[k]},J_G)\},
\]
and
\[
c_G(\mm) :=  \lim_{k\to \infty}\frac{v_G(k)}{k},
\]
namely the diagonal $F$-threshold $c^\mm(\mm)$ of $S/J_G$.
It is worth mentioning that in the article \cite{LaClair2025}, the author studies $c^{\mm}(J_G)$ relating this invariant to linear programs. 
In this paper we study $\cmm$ and our focus is to find strong relations between combinatorics of $G$ and $\cmm$.

One of the first observations is that
\begin{equation}\label{eq:n}
\cmm\geq n,
\end{equation}
in particular cycles have $\cmm =n$ (see Section \ref{sec:pre}). Then, we focus on independent sets of vertices and matchings. In particular, if a graph has a perfect matching, then we prove that $\cmm=n$; moreover from K\"onig theorem, we derive that a bipartite graph has $\cmm=2d$ where $d$ is the cardinality of maximum independent set (see Section \ref{sec:indmat}). This result inspired us to extend the concept of matching  
by considering cliques of higher cardinality instead of edges (see Section \ref{sec:bounds}).
A graph $G$ is said \emph{clique-coverable} if there exists a partition of the vertices $V(G)$ in cliques of $G$. We generalize this notion considering clique-coverings of subgraphs of $G$. Let $H$ be a subgraph of $G$ that is clique-coverable and let $P_H$ be a partition of $H$ in cliques. We refer to $P_H$ as a \emph{clique matching}.
We call $b(G)=\max\{|V(H)| : \ P_H \mbox{ clique matching of } G \}$.
Thanks to this combinatorial invariant we obtain the following upper  bound
\begin{equation}\label{eq:2nbg}
	\cmm\leq 2n-b(G).
\end{equation}
We also prove that the equality holds in the case of block graphs  (see Section \ref{sec:blocks}), but it does not in the case of odd cycles. Hence in general we obtain that
\begin{equation}\label{eq:bounds}
	n\leq \cmm\leq 2n-b(G).
\end{equation}
and there are many classes of graphs for which one of the bounds is an equality. In fact, $\cmm=n$ when particular subgraphs cover all the vertices, as in the cases of clique-coverable graphs and graphs with a $2$-factor. It is of interest to find good subgraphs $H$ that induce $\cmm$ attaining one of the two bounds.

\section{Tensor product and cycles}\label{sec:pre}
In this section, we recall properties of the diagonal $F$-threshold and we compute $\cmm$ for cycles.

Let $R=K[x_1, \ldots, x_n]$, $S=K[y_1, \ldots, y_m]$, $T=K[x_1, \ldots, x_n,y_1, \ldots, y_m]$, $\mathfrak{n}=(x_1,\ldots,x_n), \mathfrak{m}=(y_1,\ldots, y_m)$, let $I \subseteq R$ and $J \subseteq S$ be ideals, let $M=R/I$, $N=S/J$ and $L=T/(I+J)$. It is clear that 
\[
L=M\otimes N.
\]
Moreover we have the following
\begin{lemma}\label{lem:tp}
In the previous setting, 
\[
c^{\mathfrak{m} + \mathfrak{n}}(\mathfrak{m} + \mathfrak{n})=c^{\mm}(\mathfrak{m})+c^{\mathfrak{n}}(\mathfrak{n}).
\]
\end{lemma}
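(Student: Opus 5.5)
The plan is to reduce the statement to an identity between the functions $v(k)$ for every $k$, and then pass to the limit. We work in the standard graded setting: $I$ and $J$ are homogeneous, as is the case for binomial edge ideals, which is the only case used later. For each $k$ set
\[
A_k=R/(\mathfrak{n}^{[k]}+I),\qquad B_k=S/(\mathfrak{m}^{[k]}+J),\qquad C_k=T/\bigl((\mathfrak{m}+\mathfrak{n})^{[k]}+I+J\bigr).
\]
Since $(\mathfrak{m}+\mathfrak{n})^{[k]}=\mathfrak{m}^{[k]}+\mathfrak{n}^{[k]}$, the quotient $C_k$ is $T$ modulo the sum of an ideal generated in $R$ and an ideal generated in $S$. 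Exactly as for $L=M\otimes N$, this gives an isomorphism of graded $K$-algebras $C_k\cong A_k\otimes_K B_k$.

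The first step is to translate $v$ into a statement about top degrees. Let $D$ be a standard graded $K$-algebra of finite length, generated in degree $1$, with homogeneous maximal ideal $\mathfrak{p}$, written as a quotient $D=P/Q$ of a polynomial ring $P$ with irrelevant ideal $\mathfrak{p}_P$. Then $\mathfrak{p}_P^r\not\subseteq Q$ holds if and only if $D_r\neq 0$, because $\mathfrak{p}_P^r$ is spanned by the forms of degree at least $r$ and $Q$ is homogeneous. Moreover $D_r\neq 0$ forces $D_s\neq 0$ for all $s\le r$, since $D_r=D_1D_{r-1}$. Hence the $v$ function attached to $D$ equals the top degree $t(D)=\max\{r : D_r\neq0\}$. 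All three algebras are Artinian because $\mathfrak{n}^{[k]}$, $\mathfrak{m}^{[k]}$ and $(\mathfrak{m}+\mathfrak{n})^{[k]}$ are primary to the respective maximal ideals. Applying this to each of them gives $v_M(k)=t(A_k)$, $v_N(k)=t(B_k)$ and $v_L(k)=t(C_k)$, where $v_M$, $v_N$, $v_L$ are the $v$ functions defining $c^{\mathfrak{n}}(\mathfrak{n})$, $c^{\mathfrak{m}}(\mathfrak{m})$ and $c^{\mathfrak{m}+\mathfrak{n}}(\mathfrak{m}+\mathfrak{n})$.

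The second step is the degree count. Over a field we have $(A_k\otimes_K B_k)_r=\bigoplus_{i+j=r}(A_k)_i\otimes_K(B_k)_j$, and a tensor product of nonzero vector spaces is nonzero. The component of degree $t(A_k)+t(B_k)$ therefore contains $(A_k)_{t(A_k)}\otimes(B_k)_{t(B_k)}\neq0$, while all higher components vanish. Thus $t(C_k)=t(A_k)+t(B_k)$, that is,
\[
v_L(k)=v_M(k)+v_N(k)\quad\text{for every }k.
\]
Dividing by $k$ and letting $k\to\infty$ gives the lemma, since the limits defining the two summands exist by \cite{destefani2017existence} (or, in this graded setting, by \cite{BMRS}).

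The only delicate point is the first step, namely identifying $\max\{r : \mathfrak{p}^r\not\subseteq Q\}$ with the top nonvanishing degree. This uses homogeneity of $I$ and $J$ and generation in degree one, and it is exactly where the graded hypothesis enters; the tensor decomposition and the degree count are routine.
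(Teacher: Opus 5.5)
Your proof is correct. It rests on the same mechanism as the paper's: in the Artinian quotient, the largest $r$ with $\mathfrak{p}^r\not\subseteq Q$ is additive under $\otimes_K$, which gives $v_L(k)=v_M(k)+v_N(k)$ for every $k$. The difference lies in how this is expressed, and it affects the scope.

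The paper argues with monomials:
\begin{itemize}
\item For the lower bound, $\mathfrak{n}^{v_M(k)}$ is generated by monomials, so some monomial $u\in R$ of degree $v_M(k)$ lies outside $(I,\mathfrak{n}^{[k]})$. Likewise there is such a $u'\in S$, and $uu'$ witnesses $v_L(k)\ge v_M(k)+v_N(k)$.
\item For the upper bound, every monomial of degree $v_M(k)+v_N(k)+1$ factors as $uu'$ with $u\in\mathfrak{n}^{v_M(k)+1}$ or $u'\in\mathfrak{m}^{v_N(k)+1}$.
\end{itemize}
Your graded formulation makes rigorous the one step the paper leaves tacit, namely why $uu'\notin(I+J,(\mathfrak{m}+\mathfrak{n})^{[k]})$: this is exactly your observation that $\bar u\otimes\bar u'\neq 0$ in $A_k\otimes_K B_k$.

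The cost is that you must assume $I$ and $J$ homogeneous. The lemma is stated for arbitrary ideals, and the monomial argument never uses homogeneity. The restriction is harmless for the paper, since every later use involves binomial edge ideals or the zero ideal, as in Lemma~\ref{lem:subgraph}.

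You can remove it inside your own framework with two replacements, writing $\mathfrak{p}_A,\mathfrak{p}_B$ for the maximal ideals of $A_k,B_k$:
\begin{itemize}
\item Replace the top degree $t(D)$ by $\max\{r:\mathfrak{p}^rD\neq 0\}$, which equals the $v$ function by definition.
\item Replace the graded decomposition by $(\mathfrak{p}_A\otimes B_k+A_k\otimes\mathfrak{p}_B)^r=\sum_{i+j=r}\mathfrak{p}_A^i\otimes\mathfrak{p}_B^j$.
\end{itemize}
The same degree count then goes through verbatim for all ideals.
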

\begin{proof}
Let $u$ be a  monomial of degree $v_\mm^\mm(p^e)$ in $R$ such that $u \notin (I,\mm^{[p^e]})$, and let $u'$ be a  monomial of degree $v_\nn^\nn(p^e)$ in $S$ such that $u' \notin (J,\nn^{[p^e]})$, then $uu' \notin  (I+J,(\mm+\nn)^{[p^e]})$. Conversely, any monomial of degree $v_\mm^\mm(p^e)+v_\nn^\nn(p^e)+1$ lies in $(I+J,(\mm+\nn)^{[p^e]})$ because its degree is either greater than $v_\mm^\mm(p^e)$ in $x_1,\ldots,x_n$ or greater than $v_\nn^\nn(p^e)$ in $y_1,\ldots,y_m$.
\end{proof}

Let $S=K[x_1,\ldots,x_n,y_1,\ldots,y_n]$ be the polynomial ring in $2n$ variables. We represent any monomial in $S$ with a $2 \times n$ matrix representing the multidegree in the following way: if $u=x_1^{\alpha_1}\cdots x_n^{\alpha_n} y_1^{\beta_1}\cdots y_n^{\beta_n}$, then
\begin{equation}\label{eq: M(u)}
	M(u)=\begin{pmatrix}
		\alpha_1 &\ldots & \alpha_n\\
		\beta_1 &\ldots & \beta_n
	\end{pmatrix},
\end{equation}
and we write $u=X^{M(u)}$.
Let $G$ be a finite simple graph and  let $J_G$ be its binomial edge ideal. To compute $\cmm$, we study the containment 
\[
\mm^r \subset (\mm^{[k]},J_G)
\]
for any $k$ in $\NN$ and suitable $r \in \NN$.
For this aim, we observe that given a binomial $f_{ij}=x_iy_j-x_jy_i$ and a monomial $u \in S$ with matrix $M(u)$ as in Equation \eqref{eq: M(u)},  the substitution $x_iy_j=x_jy_i$ produces a monomial $u'$ such that 
\[
M(u')=\begin{pmatrix}
	\ldots &\alpha_i-1 &\ldots & \alpha_{j}+1 &\ldots\\
	\ldots &\beta_i+1 &\ldots & \beta_j-1 &\ldots
\end{pmatrix}.
\]

\begin{lemma} \label{ex:2x2 reduction}
	Let $k\in \NN$, let $G$ be a graph and let $\{i,j\}\in E(G)$. 
	Let $u$ be a monomial with matrix $M(u)$. If $\alpha_i+\alpha_j+\beta_i+\beta_j \geq 2(k-1)+1$, then $u \in (\mm^{[k]},J_G)$.
\end{lemma}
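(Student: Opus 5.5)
The plan is to work only inside the $2\times 2$ block of $M(u)$ formed by columns $i$ and $j$, and to use the substitution $x_iy_j \leftrightarrow x_jy_i$ described above. Each substitution replaces $u$ by a monomial $u'$ with $u-u'\in J_G$, since $u-u'$ is a monomial multiple of $\pm f_{ij}$. So it suffices to reach, by a finite chain of such substitutions, a monomial with some exponent $\geq k$, hence lying in $\mm^{[k]}$.

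Write $a=\alpha_i$, $b=\alpha_j$, $c=\beta_i$, $d=\beta_j$. There are two moves:
\begin{itemize}
\item the forward move $(a,b,c,d)\mapsto(a-1,b+1,c+1,d-1)$, allowed when $a,d\geq 1$, which uses $x_iy_j\to x_jy_i$;
\item the backward move $(a,b,c,d)\mapsto(a+1,b-1,c-1,d+1)$, allowed when $b,c\geq 1$.
\end{itemize}
Both moves preserve the row sums $a+b$ and $c+d$ and the column sums $a+c$ and $b+d$ of the block. The other columns of $M(u)$ never change.

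Since $a+b+c+d\geq 2k-1$, at least one row sum is $\geq k$ and at least one column sum is $\geq k$. Choose such a row and such a column. Their intersection is one of the four entries, and by the symmetry of the situation we may assume it is $a$, so $a+b\geq k$ and $a+c\geq k$. (The cases $b$, $c$, $d$ are identical after swapping the roles of the two moves and/or of $i$ and $j$.)

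Now apply the backward move repeatedly; it increases $a$. It can only stop when $b=0$ or $c=0$, which happens after finitely many steps since $b$ strictly decreases.
\begin{itemize}
\item If $b=0$, then $a=a+b\geq k$ by invariance of the row sum.
\item If $c=0$, then $a=a+c\geq k$ by invariance of the column sum.
\end{itemize}
In either case the final monomial $u'$ is divisible by $x_i^k$, so $u'\in\mm^{[k]}$. Since $u\equiv u' \pmod{J_G}$, we get $u\in(\mm^{[k]},J_G)$.

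There is no real obstacle. The only point needing care is choosing the entry at the intersection of a large row and a large column, so that both possible stopping conditions give exponent $\geq k$. The hypothesis $\geq 2(k-1)+1$ is exactly what guarantees that such a row and column both exist.
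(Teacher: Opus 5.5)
Your proof is correct and uses the same idea as the paper. Both arguments shift exponents within the $2\times 2$ block of columns $i,j$ using $x_iy_j\leftrightarrow x_jy_i$, which preserves the row and column sums of the block, until some exponent reaches $k$. The difference is only in bookkeeping. The paper first zeroes out $\alpha_i$ and then applies pigeonhole to the column sums, with two ``without loss of generality'' reductions. You instead pick an entry lying in both a row and a column of sum at least $k$, so a single monotone run of moves finishes with no further case split.
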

\begin{proof}
	We consider the following submatrix of $M(u)$
	\[
	\begin{pmatrix}
		\alpha_i & \alpha_j\\
		\beta_i &\beta_j
	\end{pmatrix}
	\]
	Without loss of generality we assume $\alpha_i\leq\beta_j$, after $\alpha_i$ steps of reductions induced by the relation $x_iy_j=x_jy_i$, we obtain the submatrix
	\[
	\begin{pmatrix}
		0 & \alpha_j+\alpha_i\\
		\beta_i+\alpha_i &\beta_j-\alpha_i
	\end{pmatrix} = \begin{pmatrix}
		0 & \alpha_j'\\
		\beta_i' &\beta_j'
	\end{pmatrix}.
	\]
	Now, we assume $\alpha_j'\leq \beta_i'$. If $\alpha_j'+\beta_j'\leq k-1$ then $\beta_i'\geq k$ and $u \in (\mm^{[k]},J_G)$. If $\alpha_j'+\beta_j'\geq k$, after $\alpha_j'$ reductions we have
	\[
	\begin{pmatrix}
		\alpha_j' & 0\\
		\beta_i' -\alpha_j' &\beta_j'+\alpha_j'
	\end{pmatrix},
	\]
	and $\beta_j'+\alpha_j'\geq k$, and $u \in (\mm^{[k]},J_G)$, too.
\end{proof}	

We recall the notion of admissible path, introduced in  \cite{HHHKR} in order to compute Gr\"obner bases of binomial edge ideals. A path $\pi:i=i_0,i_1,\ldots,i_r=j$ in a graph $G$ is called {\em admissible}, if
\begin{enumerate}
	\item $i_k\neq i_\ell$ for $k\neq \ell$;
	\item for each $k=1,\ldots,r-1$ one has either $i_k<i$ or $i_k>j$;
	\item for any proper subset $\{j_1,\ldots,j_s\}$ of $\{i_1,\ldots,i_{r-1}\}$, the sequence $i,j_1,\ldots,j_s,j$ is not a path.
\end{enumerate}
Given an admissible path $\pi:i=i_0,i_1,\ldots,i_r=j$ from $i$ to $j$ with $i<j$ we associate the monomial  $u_\pi=(\prod_{i_k>j}x_{i_k})(\prod_{i_\ell<i}y_{i_\ell})$. In \cite{HHHKR} it is shown that
\[
\ini_< (J_G)=(x_iy_ju_\pi\:\; \pi \mbox{ is an admissible path}).
\]

Let $u=z_{i_1}^{a_{i_1}}\cdots z_{i_r}^{a_{i_r}}$ be a monomial in $S$, with $z_{i_j}\in \{x_{i_j},y_{i_j}\}$ and $a_{i_j}>0$. We set $V(u)=\{i_1,\ldots,i_r\}$. 
\begin{proposition}\label{pro:groeb}Let $G$ be a graph on $[n]$. Then
	the reduced Gr\"obner basis of $(\mm^{[k]},J_G)$ is $\mathcal{M}\cup\mathcal{B}$, where $\mathcal{M}$ is a set of monomials, and $\mathcal{B}$ is the set of binomials which coincides with the  Gr\"obner basis of $J_G$.
	Moreover, for all $u\in \mathcal{M}$ we have $\deg(u)\geq k$ and
	\begin{enumerate}
		\item if $\deg(u)=k$ then $u\in \mm^{[k]}$;
		\item if $\deg(u)>k$ then $G_{V(u)}$ is a connected graph and there exists $x_i\mid u$ and $y_j\mid u$.
	\end{enumerate} 
\end{proposition}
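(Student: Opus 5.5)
The plan is to run Buchberger's algorithm on the generating set $\{x_i^k,y_i^k : i\in[n]\}\cup\mathcal{G}$, where $\mathcal{G}$ is the reduced Gröbner basis of $J_G$ from \cite{HHHKR}. Its elements are the binomials $f_\pi=u_\pi f_{ij}$ with $\pi$ admissible, and they have squarefree leading terms $x_iy_ju_\pi$. The key structural fact I would use first is this: the S-polynomial of a monomial $w$ and a binomial $g=m_1-m_2$ (with leading term $m_1$) is the monomial $\frac{\mathrm{lcm}(w,m_1)}{m_1}m_2$. Moreover, S-polynomials between binomials of $\mathcal{G}$ reduce to $0$ modulo $\mathcal{G}$. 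So every new element produced during the algorithm is a monomial, and the binomials never need to change. Hence the reduced Gröbner basis has the form $\mathcal{M}\cup\mathcal{B}$ with $\mathcal{M}$ monomial. Interreduction can only remove binomials whose terms are divisible by elements of $\mathcal{M}$. I would argue this does not happen because every element of $\mathcal{M}$ has degree $\ge k$, while for $k$ large the binomials of $\mathcal{G}$ are squarefree of degree $\le n+1$. For small $k$, I would accept that the statement is read as: the binomial part is the part of $\mathcal{G}$ that survives, or I would assume $k>n+1$, which is the relevant range for $\cmm$.

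For the degree bound I would use induction along the algorithm. The generators $x_i^k, y_i^k$ have degree exactly $k$. Since $g$ is homogeneous, a new monomial $\frac{\mathrm{lcm}(w,m_1)}{m_1}m_2$ has degree $\deg\mathrm{lcm}(w,m_1)\ge\deg w\ge k$. Equality forces $m_1\mid w$. A monomial divisible by a leading term of $\mathcal{G}$ is not reduced, so it never appears in the reduced basis. This gives $\deg u\ge k$, and also shows that degree-$k$ elements of the reduced basis are the original pure powers, i.e. lie in $\mm^{[k]}$.

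For (2), take $u\in\mathcal{M}$ with $\deg u>k$. The idea is to track a support invariant. Each new monomial arises as $\frac{\mathrm{lcm}(w,m_1)}{m_1}m_2$, where $w$ and $m_1$ share a variable. The support of $m_1$ and $m_2$ is $\{i,j\}\cup V(u_\pi)$, which is the vertex set of the path $\pi$ and therefore connected in $G$. Taking the union with $V(w)$ along a common vertex preserves connectedness of the induced subgraph, by induction starting from single vertices. Because of the lcm, the resulting monomial contains, besides the other variables, the $x$ and $y$ variables from $m_2=x_jy_iu_\pi$. So it contains at least one $x$ and at least one $y$. By minimality in the reduced basis, every element of $\mathcal{M}$ of degree $>k$ is such an S-polynomial output, or a reduction of one.

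I expect the main obstacle to be that last point: reductions by other monomials could a priori destroy connectivity. Reducing a monomial modulo monomials only deletes it, but reducing modulo binomials replaces $m_1$-multiples by $m_2$-multiples. Each such step keeps the support inside the union of connected supports that share a vertex, and keeps some $x$ and some $y$, because $m_2$ contains both $x_j$ and $y_i$. I would handle this by proving the invariant ``$G_{V(u)}$ connected and $u$ mixed, or $u$ a pure power'' for every monomial occurring anywhere in the algorithm, and then observing that it passes to the reduced basis.
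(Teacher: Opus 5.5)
Your proposal is correct and follows essentially the same route as the paper. You run Buchberger on $\mm^{[k]}$ together with the Gröbner basis of $J_G$, note that monomial--binomial S-pairs and their reductions are monomials, use that reduction by binomials preserves $V(u)$, and obtain connectivity by gluing $G_{V(u)}$ to the path $\pi$ along a shared vertex, with coprime pairs discarded by Buchberger's criterion. Your restriction to $k$ larger than the degrees of the binomials (for $k=1$ the binomial part really does disappear) and your degree bookkeeping are more careful than the paper's one-line justifications. To close the step ``equality forces $m_1\mid w$'', observe that every monomial in the working basis is either a pure power, which is never divisible by the squarefree $m_1$ since $m_1$ involves two distinct variables, or a normal form modulo $\mathcal{G}$, so equality never occurs.
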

\begin{proof} The first statement holds, observing that the only binomials in the ideal $(\mm^{[k]},J_G)$ are the ones in $J_G$ and the $S$-pair of a monomial with a binomial is a monomial itself. Hence the only binomials in the Gr\"obner basis are the ones of the Gr\"obner basis of $J_G$. To prove the second statement, since the $S$-pair of two monomials is $0$, we focus on the S-pair
	\begin{equation}\label{eq:monspair}
		u'=S(u,u_\pi f_{ij}) \neq 0,
	\end{equation}
	for some monomial $u$ and $u_\pi f_{ij}\in \mathcal{B}$.
	We assume $u'$ in \eqref{eq:monspair} reduced by the division algorithm. In fact if $u'$ is not reduced then there are binomials in $\mathcal{B}$ that reduce it either to zero or to a new monomial $u'' \neq 0$ such that $V(u')=V(u'')$. Namely, these reductions are invariant with respect to the function $V(\_)$. 
	
	We first show that if $G_{V(u)}$ is connected then either $G_{V(u')}$ is connected or $u'$ is $0$. At first we assume $u\in\{x_1^k,\ldots,x_n^k,y_1^k,\ldots,y_n^k\}$. We have that if $u$ and $u_\pi x_iy_j$ are coprime then $u'$ is $0$. If $u$  and $u_\pi x_i y_j$ are not coprime  then $V(u)\subset V(u_\pi x_i y_j)$.  Hence $S(z_h^k, u_\pi f_{ij})=u'$ with $z_h\in\{x_h,y_h\}$ for some $h \in \{1,\ldots, n\}$ where $V(u')=V(u_\pi x_i y_j)$, that is $G_{V(u')}$ is a path, hence connected. By the same argument, if $u$ is such that $G_{V(u)}$ is connected in \eqref{eq:monspair} and $u$ is not coprime with $u_\pi x_i y_j$, then $V(u')=V(u)\cup V(u_\pi x_i y_j)$. Namely, $V(u')$ is the union of the set of the vertices of the path $\pi$  and the set of vertices of the connected graph $G_{V(u)}$, and they have at least a common vertex since $u$ and $u_\pi x_iy_j$ are not coprime. Hence, $G_{V(u')}$ is connected.
	Now, focus on the degree of $u'$. Suppose that $u'\in \mathcal{M}$ with $u'\notin \mm^{[k]}$. Then $u'$ is as in \eqref{eq:monspair}, and it is a multiple of the trailing monomial of the homogeneous binomial $u_\pi f_{ij}$, that is $u_\pi y_jx_i$ and the statement follows  easily.
\end{proof}

\begin{lemma}\label{lem:geq}
Let $G$ be a graph. Then 
\[
\cmm \geq n
\]
\end{lemma}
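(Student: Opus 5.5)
The plan is to exhibit, for every $k$, an explicit monomial of degree $n(k-1)$ that is not in $(\mm^{[k]},J_G)$. This gives $v_G(k)\geq n(k-1)$, and dividing by $k$ and letting $k\to\infty$ yields $\cmm\geq n$. The candidate is the pure $x$-monomial
\[
u=x_1^{k-1}x_2^{k-1}\cdots x_n^{k-1},
\]
whose matrix $M(u)$ has first row $(k-1,\ldots,k-1)$ and second row zero.

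To show $u\notin(\mm^{[k]},J_G)$, I would use the $\mathbb{Z}^2$-grading on $S$ given by $\deg x_i=(1,0)$ and $\deg y_i=(0,1)$.
\begin{enumerate}
\item Each generator $f_{ij}=x_iy_j-x_jy_i$ is homogeneous of bidegree $(1,1)$, and each generator $x_i^k$, $y_i^k$ of $\mm^{[k]}$ is a monomial, hence bihomogeneous. So the ideal $(\mm^{[k]},J_G)$ is bihomogeneous.
\item Its component of bidegree $(d,0)$ is therefore spanned by products $g\cdot h$, where $h$ is a generator and $g$ is bihomogeneous.
\item Any product involving some $f_{ij}$ has $y$-degree at least $1$, so it contributes nothing in bidegree $(d,0)$.
\item Hence the bidegree-$(d,0)$ part of the ideal equals the bidegree-$(d,0)$ part of $(x_1^k,\ldots,x_n^k)$, which is spanned by monomials in the $x$-variables with some exponent at least $k$.
\item Since every exponent of $u$ equals $k-1$, $u$ is not in this span, so $u\notin(\mm^{[k]},J_G)$.
\end{enumerate}

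Alternatively, the same conclusion follows from Proposition \ref{pro:groeb}, which determines the initial ideal of $(\mm^{[k]},J_G)$:
\begin{enumerate}
\item The generators $x_iy_ju_\pi$ of $\ini_<(J_G)$ each contain a $y$-variable, so none divides $u$.
\item The monomials in $\mathcal{M}$ of degree $k$ lie in $\mm^{[k]}$. They are pure powers of degree $k$, so none divides $u$.
\item The monomials in $\mathcal{M}$ of degree greater than $k$ contain some $y_j$, so none divides $u$.
\end{enumerate}
Thus $u$ is a standard monomial and does not lie in the ideal.

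I do not expect a real obstacle. The only point to state carefully is that membership of the pure $x$-monomial can be tested in a single graded component, which is exactly what the bigrading argument provides. The conclusion is then $\mm^{n(k-1)}\not\subseteq(\mm^{[k]},J_G)$, so $v_G(k)\geq n(k-1)$ and
\[
\cmm=\lim_{k\to\infty}\frac{v_G(k)}{k}\geq\lim_{k\to\infty}\frac{n(k-1)}{k}=n.
\]
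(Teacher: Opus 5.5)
Your proof is correct. Your second argument is exactly the paper's proof. The paper takes the same monomial $u=\prod_{i=1}^n x_i^{k-1}$ and concludes $u\notin(\mm^{[k]},J_G)$ directly from Proposition \ref{pro:groeb}: no leading term of that Gr\"obner basis divides a pure $x$-monomial with all exponents $k-1$.

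Your main argument, via the bigrading $\deg x_i=(1,0)$, $\deg y_i=(0,1)$, is a genuinely different and more elementary route. Each $f_{ij}$ has bidegree $(1,1)$ and each $y_i^k$ has bidegree $(0,k)$, so only the generators $x_i^k$ contribute in bidegree $(d,0)$. Membership of $u$ therefore reduces to membership in the monomial ideal $(x_1^k,\ldots,x_n^k)$. In step 3 you should say explicitly that the $y_i^k$ drop out as well, since step 4 relies on it. Equivalently, you could apply the specialization $y_i\mapsto 0$, which kills $J_G$, sends $\mm^{[k]}$ into $(x_1^k,\ldots,x_n^k)$, and fixes $u$.

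What your route buys is independence from the monomial order and from the description of the reduced Gr\"obner basis of $(\mm^{[k]},J_G)$ in Proposition \ref{pro:groeb}. It also works verbatim for $(\mm^{[k]},I)$ with $I$ any ideal of $2$-minors of the generic $2\times n$ matrix. The paper's route instead gives one uniform criterion, Proposition \ref{pro:groeb}, which it reuses for the mixed monomials in Lemma \ref{lem:independentset} and Proposition \ref{pro:lowupbound}. For those monomials the coarse bigrading alone does not decide membership. There you would have to refine your grading, for instance by the $\mathbb{Z}^n$-grading $\deg x_i=\deg y_i=e_i$; $J_G$ is homogeneous for it, and it already handles Lemma \ref{lem:independentset}.
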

\begin{proof}
To prove the claim we prove that 
	\[
	v_G(k)\geq n (k-1),
	\]
	that is we find a monomial of degree $n(k-1)$ that is not in $(\mm^{[k]},J_G)$, and we conclude by taking the limit 
	\[
	\cmm= \lim_{k\to \infty} \frac{v_G(k)}{k} \geq \lim_{k\to \infty} \frac{n(k-1)}{k}.
	\]
	By Proposition \ref{pro:groeb} we have 
	\[
	u=\prod_{i=1}^n x_i^{k-1} \notin (\mm^{[k]},J_G),
	\]
	hence the assertion follows.
\end{proof}
\begin{corollary}\label{cor:K2}
Let $G=K_2$, then $\cmm=2$.
\end{corollary}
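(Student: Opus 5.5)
The plan is to sandwich $v_{K_2}(k)$ between two quantities that are both $2k+O(1)$, then divide by $k$ and let $k\to\infty$.

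For the lower bound, apply Lemma~\ref{lem:geq} with $n=2$. This already gives $v_{K_2}(k)\geq 2(k-1)$, and hence $\cmm\geq 2$.

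For the upper bound, the graph $K_2$ has the single edge $\{1,2\}$, and $S=K[x_1,x_2,y_1,y_2]$. Any monomial $u$ of degree $r$ has matrix entries satisfying
\[
\alpha_1+\alpha_2+\beta_1+\beta_2=r.
\]
If $r\geq 2(k-1)+1$, then Lemma~\ref{ex:2x2 reduction}, applied to the edge $\{1,2\}$, gives $u\in(\mm^{[k]},J_{K_2})$. Since $\mm^r$ is generated by monomials of degree $r$, this yields
\[
\mm^{2k-1}\subseteq(\mm^{[k]},J_{K_2}).
\]
Therefore $v_{K_2}(k)\leq 2k-2$.

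Combining the two bounds gives $v_{K_2}(k)=2k-2$ exactly. Then
\[
\cmm=\lim_{k\to\infty}\frac{2k-2}{k}=2.
\]

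There is no real obstacle here. The one point to check is that Lemma~\ref{ex:2x2 reduction} covers every monomial of degree $2k-1$ at once. That holds because the lemma only requires the total degree on the two endpoints of the edge to be at least $2k-1$, and for $K_2$ these two endpoints are all the vertices.
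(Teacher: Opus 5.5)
Your proposal is correct and matches the paper's proof: the lower bound comes from Lemma~\ref{lem:geq}, and the upper bound $v_{K_2}(k)\le 2(k-1)$ comes from applying Lemma~\ref{ex:2x2 reduction} to the unique edge $\{1,2\}$. Your added remark that this edge covers all vertices, so every monomial of degree $2k-1$ is handled at once, is exactly the point the paper relies on implicitly.
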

\begin{proof}
From Lemma \ref{lem:geq}, we have $\cmm\geq 2$. 
For any $k \in \NN$, we prove $v_G(k)\leq 2 (k-1)$, by proving that any monomial $u$ of degree $2(k-1)+1$ is in $(\mm^{[k]},J_G)$.  We observe that we have 
\[
M(u)=	\begin{pmatrix}
		\alpha_1 & \alpha_2\\
		\beta_1 &\beta_2
	\end{pmatrix}
\]
and by applying Lemma \ref{ex:2x2 reduction} the assertion follows.
\end{proof}

\begin{theorem}\label{the:oddcycles}
	Let $G$ be a cycle on $n$ vertices. Then
	\[
	\cmm= n
	\]
\end{theorem}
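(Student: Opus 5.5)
The plan is to match the lower bound of Lemma \ref{lem:geq} with an upper bound $v_G(k)\leq n(k-1)$ for every $k\in\NN$. Dividing by $k$ and letting $k\to\infty$ then gives $\cmm\leq n$. So it is enough to show that every monomial $u$ of degree $n(k-1)+1$ lies in $(\mm^{[k]},J_G)$. Every monomial of larger degree is a multiple of one of degree $n(k-1)+1$, so this covers all degrees above $n(k-1)$.

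Write $M(u)$ as in \eqref{eq: M(u)} and set $s_i=\alpha_i+\beta_i$ for the $i$-th column sum, so that $\sum_{i=1}^n s_i=\deg u$. Label the cycle so that its edges are $\{1,2\},\{2,3\},\ldots,\{n-1,n\},\{n,1\}$. Each vertex lies on exactly two edges, so
\[
\sum_{\{i,j\}\in E(G)} (s_i+s_j) \;=\; 2\sum_{i=1}^n s_i \;=\; 2\deg u \;=\; 2n(k-1)+2 .
\]
There are exactly $n$ edges, so by pigeonhole some edge $\{i,j\}$ satisfies $s_i+s_j\geq 2(k-1)+1$. (If every edge had $s_i+s_j\leq 2(k-1)$, the sum above would be at most $2n(k-1)$, a contradiction.)

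For that edge we have $\alpha_i+\alpha_j+\beta_i+\beta_j\geq 2(k-1)+1$. Lemma \ref{ex:2x2 reduction} then gives $u\in(\mm^{[k]},J_G)$. Hence $v_G(k)\leq n(k-1)$, and together with Lemma \ref{lem:geq} this yields $\cmm=n$.

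The only delicate point is the parity issue one might first expect. For even $n$ a perfect matching of the cycle would already do the job, but for odd $n$ no perfect matching exists. Comparing with a single vertex left unmatched would only give the weaker bound $n+1$. The double-counting argument above avoids this, because it uses the $2$-regularity of the cycle instead of a matching, and so it treats odd and even $n\geq 3$ uniformly.
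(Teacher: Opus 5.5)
Your argument is correct, but it organizes the edge sums differently from the paper.

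The paper splits by parity. For even $n$ it appeals to Theorem~\ref{the:bipartite}, and so rests on K\"onig's theorem, Corollary~\ref{cor:matchingbound} and Lemmas~\ref{lem:subgraph}, \ref{lem:tp}. For odd $n$ it proceeds in three steps:
\begin{enumerate}
\item it drops column $C_n$ and pairs the remaining columns along $\{1,2\},\{3,4\},\dots,\{n-2,n-1\}$, which forces $\alpha_n+\beta_n\geq k$;
\item it drops $C_1$ and pairs along $\{2,3\},\dots,\{n-1,n\}$, which forces $\alpha_1+\beta_1\geq k$;
\item it applies Lemma~\ref{ex:2x2 reduction} to the edge $\{1,n\}$.
\end{enumerate}
These three groups together use every edge of the cycle exactly once, so the paper's odd case is really a step-by-step unpacking of your double count.

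Your version compresses this into a single averaging step, with weight $1/2$ on each edge. It removes the parity split and the forward reference to the bipartite theorem, and it uses only Lemmas~\ref{lem:geq} and~\ref{ex:2x2 reduction}.

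It also generalizes immediately. Put weight $1$ on matching edges and $1/2$ on cycle edges; the same computation then gives $\cmm=n$ for any graph with a spanning subgraph whose components are edges and cycles. This recovers Proposition~\ref{prop:pm} and Theorem~\ref{thm:2fac} without Lemmas~\ref{lem:tp} and~\ref{lem:subgraph}.

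What the paper's route buys in exchange is that the even case sits inside the bipartite result, which the paper proves anyway.
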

\begin{proof}
	We observe that the case of $n$ even is proved in a more general setting in Theorem \ref{the:bipartite}, observing that a even cycle has exactly $n/2$ independent vertices. So we focus on the case $n$ odd.
	
	To prove the claim we prove that 
	\[
	v_G(k)=n (k-1),
	\]
	that is any monomial in $\mm^{v_G(k)+1}$ is in $(\mm^{[k]},J_G)$.
     The inequality $v_G(k)\geq n (k-1)$ follows from Lemma \ref{lem:geq}.
	 To prove $v_G(k)\leq n (k-1)$,  we  consider a generic monomial $u$ of degree 
	\[
	n(k-1)+1,
	\]
	with matrix $M(u)$, and show after reduction, that we obtain a new representation of $u$, namely $u'$, such that $M(u')$ has an entry with value greater than or equal to $k$, that is $u\cong u'\in  (\mm^{[k]},J_G)$.
	Moreover, we assume that $E(G)=\{\{1,2\},\{2,3\},\ldots,\{n-1,n\}\}\cup \{\{1,n\}\}$.
	Let $M_{n-1}$ be the submatrix of $M(u)$ on the first $n-1$ columns, namely there is only one column, the last one $C_n$, that is removed. 	\[
	M_{n-1}=\begin{pmatrix}
		\alpha_1 & \alpha_2 &\ldots & \alpha_{n-2}& \alpha_{n-1}\\
		\beta_1 & \beta_2& \ldots & \beta_{n-2}& \beta_{n-1}
	\end{pmatrix}, C_n= \begin{pmatrix}
		\alpha_n\\
		\beta_n
	\end{pmatrix}.
	\]
	Since $n-1$ is even, we can partition $M_{n-1}$ into $(n-1)/2$ $2\times 2$ submatrices. By Lemma \ref{ex:2x2 reduction}, if only one of this $2\times 2$ submatrices has sum greater than to $2(k-1)$ then $u$ belongs to 
	$(\mm^{[k]},J_G)$. So each submatrix has sum at most $2(k-1)$. That is all submatrix $M_{n-1}$ has sum at most 
	\[
	\frac{n-1}{2}\cdot 2(k-1)=(n-1)(k-1).
	\]
	This implies that $C_n$ has sum at least $k$. By the same argument, if we isolate the first column, $C_1$,  and consider as a submatrix $M_{n-1}$ the one containing all the column but the first one, we have that the column $C_1$ has sum at least $k$. But in this case, since $\{1,n\}$ is an edge we have the submatrix containing the columns $C_1$, and $C_n$, namely 
	\[
	\begin{pmatrix}
		\alpha_1 & \alpha_n\\
		\beta_1 &\beta_n
	\end{pmatrix}
	\]
	has sum greater than $2k$, and by Lemma \ref*{ex:2x2 reduction}, $u\in (\mm^{[k]},J_G)$.
\end{proof}

\section{Independent sets and matchings}\label{sec:indmat}
In this Section we prove the strict connection between independent sets of vertices and matchings with $\cmm$. In particular, we compute it when $G$ is a bipartite graph.
\begin{lemma}\label{lem:independentset}
	Let $G$ be a graph on $[n]$. Let $A$ be a maximal independent set of $G$. Then
	$\cmm\geq 2 |A|$.
\end{lemma}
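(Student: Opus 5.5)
Following the proof of Lemma \ref{lem:geq}, the plan is to exhibit, for every $k$, a monomial of degree $2|A|(k-1)$ that does not lie in $(\mm^{[k]},J_G)$. Then $v_G(k)\geq 2|A|(k-1)$, and letting $k\to\infty$ gives $\cmm\geq 2|A|$. The natural candidate is
\[
u=\prod_{i\in A} x_i^{k-1}y_i^{k-1},
\]
whose matrix $M(u)$ has both entries equal to $k-1$ in the columns indexed by $A$ and zero elsewhere. Note that we only use that $A$ is independent; maximality plays no role.

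To show $u\notin(\mm^{[k]},J_G)$, I would use the Gr\"obner basis $\mathcal{M}\cup\mathcal{B}$ from Proposition \ref{pro:groeb} and check that no leading term of an element of this basis divides $u$. This makes $u$ a standard monomial, hence not in the ideal.

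\textbf{Binomials in $\mathcal{B}$.} Their leading terms are $x_iy_ju_\pi$ with $\{i,\dots,j\}$ an admissible path, so $i\neq j$ and $i,j$ are the endpoints of a path in $G$. The support of such a term contains the whole path, in particular two adjacent vertices. Since $V(u)=A$ is independent, the term cannot divide $u$.

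\textbf{Monomials in $\mathcal{M}$ of degree $k$.} By Proposition \ref{pro:groeb}(1) these lie in $\mm^{[k]}$, i.e.\ are pure powers $z_h^k$. None divides $u$, because every exponent of $u$ is $k-1$.

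\textbf{Monomials $w\in\mathcal{M}$ of degree $>k$.} By Proposition \ref{pro:groeb}(2), $G_{V(w)}$ is connected. If $w\mid u$, then $V(w)\subseteq A$, so $G_{V(w)}$ is connected with no edges, forcing $V(w)=\{h\}$ for a single vertex $h$. Then $w$ divides $x_h^{k-1}y_h^{k-1}$.

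This last case is the main obstacle. Proposition \ref{pro:groeb} only gives connectivity together with $x_i\mid w$, $y_j\mid w$, which does not by itself exclude $w=x_h^ay_h^b$ with $a,b\leq k-1$. To exclude it, I would examine the proof of Proposition \ref{pro:groeb}: every monomial of $\mathcal{M}$ outside $\mm^{[k]}$ arises, through $S$-pairs and reductions, as a multiple of some $u_\pi x_iy_j$ with $i\neq j$. Hence $|V(w)|\geq 2$, a contradiction.

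As a safer alternative, I would argue directly that $(\mm^{[k]},J_G)$ is contained in an ideal avoiding $u$. Write $B=[n]\setminus A$ and consider the prime ideal $P=(x_b,y_b : b\in B)$. Every edge meets $B$, because $A$ is independent, so $J_G\subseteq P$. Modulo $P$ the ideal becomes $(x_a^k,y_a^k : a\in A)$ in $K[x_a,y_a : a\in A]$, which visibly does not contain $u$, since $u$ involves no variable indexed by $B$. Therefore $u\notin(\mm^{[k]},J_G)$, and this argument avoids the Gr\"obner-basis subtlety entirely. I would present this second argument as the proof.
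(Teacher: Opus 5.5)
Your proof is correct, and the argument you present differs from the paper's. The paper uses the same monomial $u=\prod_{i\in A}(x_iy_i)^{k-1}$ but concludes $u\notin(\mm^{[k]},J_G)$ from Proposition~\ref{pro:groeb}: a Gr\"obner basis monomial outside $\mm^{[k]}$ dividing $u$ would force an edge in $G_{V(u)}=G_A$.

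As you noticed, this step needs such basis monomials to have support on at least two vertices. That comes from the proof of Proposition~\ref{pro:groeb}: they are multiples of $u_\pi x_jy_i$ with $i\neq j$, and binomial reductions preserve the support. It does not come from the statement, which formally allows $w=x_h^ay_h^b$.

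Your second argument avoids this issue entirely. Every edge meets $[n]\setminus A$, so each $f_{ij}$ lies in the prime $P=(x_b,y_b : b\notin A)$. Hence $(\mm^{[k]},J_G)\subseteq(x_a^k,y_a^k : a\in A)+P$, a monomial ideal none of whose generators divides $u$. This is shorter, self-contained, and independent of the loosely argued description of the reduced Gr\"obner basis.

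The paper's route has the merit of running all its lower bounds through a single tool, but yours is just as uniform. The containment $J_G\subseteq(y_1,\dots,y_n)$ gives Lemma~\ref{lem:geq}. For a CI-matching $(A,B)$, the ideal generated by the $x_b,y_b$ with $b\notin A\cup V(B)$ and the $y_j$ with $j\in V(B)$ contains $J_G$ and yields the lower bound of Proposition~\ref{pro:lowupbound}. You are also right that maximality of $A$ plays no role; the paper does not use it either.
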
	
\begin{proof}
	Let $A=\{i_1,\ldots, i_a\}$, and  $u=(x_{i_1}y_{i_1}x_{i_2}y_{i_2}\cdots x_{i_a}y_{i_a})^{k-1}$. We prove that $u\notin (\mm^{[k]},J_G)$. Obviously $u\notin \mm^{[k]}$. Moreover, by Proposition \ref{pro:groeb} $u\notin (\mm^{[k]},J_G)$, in fact  $u$ to be  multiple of a monomial in  $(\mm^{[k]},J_G)\setminus (\mm^{[k]})$ must have an edge in $G_{V(u)}$, that is impossible since it is a graph of isolated vertices.
\end{proof}

\begin{lemma}\label{lem:subgraph}
	Let $G$, $H$  be two graphs with $n$ and $n'$ vertices with $H\subseteq G$. Let $\mm\subset K[\{x_i,y_i\}_{i\in [n]}]$, and $\mm'\subset K[\{x_i,y_i\}_{i\in [n']}]$ be the respective maximal ideals. Then  
	\[
	\cmm\leq c_{H}(\mm')+2(n-n').
	\]
\end{lemma}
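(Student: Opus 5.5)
We compare $G$ with the graph $H'$ obtained from $H$ by adding the $n-n'$ vertices of $V(G)\setminus V(H)$ as isolated vertices, and prove two inequalities:
\[
\cmm\le c_{H'}(\mm)\le c_H(\mm')+2(n-n').
\]

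\emph{Monotonicity in the edge set.} Since $E(H')=E(H)\subseteq E(G)$ and $V(H')=V(G)$, we have $J_{H'}\subseteq J_G$ in the same ring $S=K[x_1,\ldots,x_n,y_1,\ldots,y_n]$. Hence $(\mm^{[k]},J_{H'})\subseteq(\mm^{[k]},J_G)$ for every $k$. If $\mm^r\not\subseteq(\mm^{[k]},J_G)$, then a fortiori $\mm^r\not\subseteq(\mm^{[k]},J_{H'})$. Therefore $v_G(k)\le v_{H'}(k)$ for all $k$, and dividing by $k$ and passing to the limit gives $\cmm\le c_{H'}(\mm)$.

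\emph{Tensor decomposition.} Relabel so that $V(H)=[n']$. The ring $S/J_{H'}$ is the tensor product of $K[x_i,y_i : i\in[n']]/J_H$ with the polynomial ring $K[x_i,y_i : i>n']$; the latter is the ring associated to a graph with no edges. By Lemma \ref{lem:tp} (the statement is for rings of the form $R/I$ with $I$ generated in the first set of variables, and the quotient by the zero ideal is allowed), we get
\[
c_{H'}(\mm)=c_H(\mm')+c^{\nn}(\nn),
\]
where $\nn$ is the maximal ideal of the polynomial ring in the $2(n-n')$ variables $x_i,y_i$ with $i>n'$. For a polynomial ring in $m$ variables, a monomial lies outside $\nn^{[k]}$ exactly when every exponent is at most $k-1$. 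So $v(k)=m(k-1)$ and $c^{\nn}(\nn)=m$; here $m=2(n-n')$. Alternatively, one can iterate Lemma \ref{lem:tp} one variable at a time, each contributing $1$. Combining the two inequalities gives the claim.

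\emph{Main obstacle.} The only subtle point is the application of Lemma \ref{lem:tp}. Its proof is phrased with $v_\mm^\mm(p^e)$ and works with monomials, so I would check that it applies with arbitrary $k$ in place of $p^e$, as the paper allows following \cite{BMRS}. I would also check that it is legitimate to restrict to monomials: the ideals $J_H$ and $(\mm^{[k]},J_H)$ are not monomial, so one should use the $\NN^n$-multigrading in which $\deg x_i=\deg y_i=e_i$. Under this grading $J_G$ is homogeneous, so containment of $\mm^r$ can be tested on monomials. If that step seemed delicate, I would instead argue directly. Take $u=u_1u_2$ of degree $v_H(k)+2(n-n')(k-1)+1$, with $u_1$ in the variables of $H$ and $u_2$ in the remaining variables. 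Either $\deg u_1>v_H(k)$, in which case $u_1\in(\mm'^{[k]},J_H)$, or $u_2$ has some exponent at least $k$. This gives $v_{H'}(k)\le v_H(k)+2(n-n')(k-1)$, which suffices after dividing by $k$ and letting $k\to\infty$.
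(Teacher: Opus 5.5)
Your proof is correct and is essentially the paper's argument. The containment $(\mm^{[k]},J_H)\subseteq(\mm^{[k]},J_G)$ in the full ring gives $v_G(k)\le v_{H'}(k)$. Lemma \ref{lem:tp}, applied to $S/J_H$ written as a tensor product with the polynomial ring in the remaining $2(n-n')$ variables, then yields the bound.

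Your explicit computation $c^{\nn}(\nn)=2(n-n')$ and your direct pigeonhole argument spell out steps the paper leaves implicit. The multigrading is not needed for the direction you use: $\mm^r\subseteq I$ can be tested on monomials for any ideal $I$, because $\mm^r$ is generated by monomials.
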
	
\begin{proof}
We start considering $J_H\subseteq K[\{x_i,y_i\}_{i\in [n]}]$.	
We observe that if exists $r\in \NN$ with 
\[
\mm^r\subseteq (\mm^{[k]},J_H)
\]
then $\mm^r\subseteq (\mm^{[k]},J_G)$. In fact it is straightforward the following containment $(\mm^{[k]},J_H)\subseteq (\mm^{[k]},J_G)$. Hence, 
\[
\max\{r \in \mm^r\not\subseteq (\mm^{[k]},J_G)\}\leq \max\{r \in \mm^r\not\subseteq (\mm^{[k]},J_H)\}.
\]
Moreover, since 
\[
K[\{x_i,y_i\}_{i\in [n]}]/J_H\cong K[\{x_i,y_i\}_{i\in [n']}]/J_H\otimes K[\{x_i,y_i\}_{i\in [n] \setminus [n']}],
\]
then the assertion follows by Lemma \ref{lem:tp}. 
\end{proof}
\begin{proposition}\label{prop:pm}
	Let $G$ be a graph on $[n]$ vertices with a perfect matching. Then
	\[
	\cmm=n.
	\]
\end{proposition}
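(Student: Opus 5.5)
The lower bound $\cmm\geq n$ is Lemma \ref{lem:geq}, so only the upper bound $\cmm\leq n$ needs proof. I will do this two ways, both resting on Corollary \ref{cor:K2} and Lemma \ref{ex:2x2 reduction}.

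First route: let $H$ be the subgraph of $G$ whose vertex set is $[n]$ and whose edge set is a perfect matching $\{i_1,j_1\},\ldots,\{i_{n/2},j_{n/2}\}$. Then $H$ is the disjoint union of $n/2$ copies of $K_2$. Its binomial edge ideal is the sum of binomial edge ideals living in pairwise disjoint sets of variables. Iterating Lemma \ref{lem:tp}, with the polynomial ring of each component being $K[x_{i_t},x_{j_t},y_{i_t},y_{j_t}]$, gives
\[
c_H(\mm)=\sum_{t=1}^{n/2}c_{K_2}=2\cdot\frac{n}{2}=n
\]
by Corollary \ref{cor:K2}. Since $H$ spans all $n$ vertices, Lemma \ref{lem:subgraph} with $n'=n$ yields $\cmm\leq c_H(\mm)+0=n$. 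The lower bound then gives equality.

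Second route, which avoids subtleties about limits: show directly that $v_G(k)\leq n(k-1)$. Take any monomial $u$ of degree $n(k-1)+1$ and split the columns of $M(u)$ into the $n/2$ pairs $\{i_t,j_t\}$ given by the matching. These pairs have total entry sum $n(k-1)+1>\frac n2\cdot 2(k-1)$. By pigeonhole, some pair $\{i_t,j_t\}$ has sum at least $2(k-1)+1$. Because $\{i_t,j_t\}\in E(G)$, Lemma \ref{ex:2x2 reduction} gives $u\in(\mm^{[k]},J_G)$. Hence $v_G(k)\le n(k-1)$, and dividing by $k$ and letting $k\to\infty$ gives $\cmm\le n$.

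The only point needing care is in the first route. Lemma \ref{lem:tp} is stated for two factors with $p^e$-powers, while here it is applied with arbitrary $k$ and many factors, and the identification with the subgraph lemma must go through. That is exactly why I would present the pigeonhole argument as the main proof: it uses only Lemma \ref{ex:2x2 reduction} and is self-contained.
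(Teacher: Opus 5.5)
Your proof is correct. Your first route is exactly the paper's argument: take the spanning subgraph $H$ given by the perfect matching, apply Lemma \ref{lem:subgraph} with $n'=n$, and compute $c_H(\mm)=n$ from Lemma \ref{lem:tp} and Corollary \ref{cor:K2}. Your worry about Lemma \ref{lem:tp} is not a real obstacle. Its proof shows that the $v$-functions add at every finite level, with nothing specific to $p$-powers, and the many-factor case follows by induction.

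Your second route, which you would present as the main proof, is a genuinely different though closely related argument. It is the partition-into-$2\times 2$-blocks pigeonhole that the paper uses for cycles in Theorem \ref{the:oddcycles}, here applied to the $n/2$ column pairs of the matching and finished with Lemma \ref{ex:2x2 reduction}. In effect it folds Lemma \ref{lem:subgraph}, Lemma \ref{lem:tp} and Corollary \ref{cor:K2} (itself just Lemma \ref{ex:2x2 reduction}) into a single counting step. This buys self-containment and a sharper conclusion. Combined with the monomial $\prod_i x_i^{k-1}$ from Lemma \ref{lem:geq}, it gives $v_G(k)=n(k-1)$ exactly for every $k$, not only the value of the limit. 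The paper's route buys modularity instead: monotonicity under spanning subgraphs and additivity over disjoint unions become reusable tools. The paper applies them again in Corollary \ref{cor:matchingbound}, Proposition \ref{pro:lowupbound} and Theorem \ref{thm:2fac}, where the blocks are cliques or odd cycles rather than edges.
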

\begin{proof}
	From Lemma \ref{lem:geq} we have that $\cmm \geq n$. We prove $\cmm \leq n$. 
	Let $H$ be the subgraph on the perfect matching. Since $V(H)=V(G)$, then from Lemma \ref{lem:subgraph} we have that
	\[
	\cmm\leq c_{H}(\mm).
	\]
	Moreover, $n=2m$ and the subgraph $H$ is a disjoint union of $m$ $K_2$ graphs, hence from Lemma \ref{lem:tp}, we have 
	\[
	c_{H}(\mm)=n
	\]
	and the assertion follows.
\end{proof}

\begin{corollary}\label{cor:matchingbound}
	Let $G$ be a graph with matching number $\MM(G)$. Then 
	\[
	\cmm\leq 2(n-\MM(G)).
	\]
\end{corollary}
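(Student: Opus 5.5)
We obtain the bound by applying Lemma \ref{lem:subgraph} to the subgraph spanned by a maximum matching, and computing its threshold with Lemma \ref{lem:tp} and Corollary \ref{cor:K2}, just as in the proof of Proposition \ref{prop:pm}.

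Let $\mathcal{E}=\{e_1,\ldots,e_m\}$ be a matching of $G$ with $m=\MM(G)$, and let $H$ be the subgraph of $G$ with vertex set $V(H)=\bigcup_{t=1}^m e_t$ and edge set $\mathcal{E}$. Then $H$ has $n'=2m$ vertices and is the disjoint union of $m$ copies of $K_2$. After relabelling the vertices we may assume $V(H)=[2m]$, as Lemma \ref{lem:subgraph} requires.

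The first step is to compute $c_H(\mm')$. The ring $K[\{x_i,y_i\}_{i\in[2m]}]/J_H$ is the tensor product of the $m$ rings $K[x_a,x_b,y_a,y_b]/(f_{ab})$, one for each edge $e_t=\{a,b\}$. Each factor has diagonal $F$-threshold $2$ by Corollary \ref{cor:K2}. Applying Lemma \ref{lem:tp} inductively on $m$ gives $c_H(\mm')=2m$.

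The second step is to apply Lemma \ref{lem:subgraph}, which gives
\[
\cmm\leq c_H(\mm')+2(n-2m)=2m+2n-4m=2(n-m)=2(n-\MM(G)).
\]

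The main point needing care is that Lemma \ref{lem:tp} is stated for the $v$ function evaluated at $p^e$, whereas here the argument $k$ is an arbitrary integer. The paper already allows this, following \cite{BMRS}. The proof of Lemma \ref{lem:tp} uses only the degree splitting of monomials between the two sets of variables, so it works verbatim for any $k$.

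A second small check is the case $m=0$, where $H$ is empty. There Lemma \ref{lem:subgraph} reduces to the trivial bound $\cmm\leq 2n$: every monomial of degree $2n(k-1)+1$ lies in $\mm^{[k]}$ by the pigeonhole principle. This agrees with $2(n-\MM(G))=2n$.
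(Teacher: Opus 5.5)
Your proof is correct and follows the paper's own argument. The paper applies Lemma~\ref{lem:subgraph} to the subgraph $H$ spanned by a maximum matching and gets $c_H(\mm')=2\MM(G)$ from Proposition~\ref{prop:pm}, whose proof is exactly your computation via Corollary~\ref{cor:K2} and Lemma~\ref{lem:tp}; your remarks on arbitrary $k$ and on $\MM(G)=0$ are harmless extra checks.
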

\begin{proof}
	Let $H$ be a maximal matching of $2\MM(G)$ vertices. Since $H$ is a perfect matching on itself, by Proposition \ref{prop:pm} and Lemma \ref{lem:subgraph}, then we have respectively  $c_{H}(\mm')=2\MM(G)$, and $$\cmm\leq 2\MM(G)+2(n-2\MM(G))=2(n-\MM(G)).$$
	
\end{proof}

\begin{theorem}\label{the:bipartite}
	Let $G$ be a bipartite graph.  Then
	\[
	\cmm=2d
	\]
	where $d$ is the cardinality of a maximum independent set.
\end{theorem}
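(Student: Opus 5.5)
The plan is to squeeze $\cmm$ between the two bounds already available and then close the gap with König's theorem. For the lower bound, take a maximum independent set $A$ of $G$ with $|A|=d$. It is in particular maximal, so Lemma \ref{lem:independentset} gives
\[
\cmm\geq 2d .
\]
For the upper bound, Corollary \ref{cor:matchingbound} gives
\[
\cmm\leq 2(n-\MM(G)),
\]
where $\MM(G)$ is the matching number. It therefore suffices to show that
\[
n-\MM(G)=d \qquad\text{for bipartite } G.
\]

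This equality is the classical König--Egerváry identity. In any graph, a set $A$ is independent exactly when its complement $V(G)\setminus A$ is a vertex cover. Hence
\[
d=n-\tau(G),
\]
where $\tau(G)$ is the minimum size of a vertex cover. For bipartite graphs König's theorem gives $\tau(G)=\MM(G)$. Combining the two yields $d=n-\MM(G)$, and so
\[
2d\leq \cmm\leq 2(n-\MM(G))=2d .
\]

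Two points need care. First, isolated vertices must be handled correctly. They belong to every maximum independent set and are covered by no matching, so both sides of the identity count them consistently. Corollary \ref{cor:matchingbound} through Lemma \ref{lem:subgraph} likewise charges $2$ for each unmatched vertex, which agrees with Lemma \ref{lem:tp} together with $c(\mm)=2$ for a single vertex (the polynomial ring $K[x,y]$). Second, in Corollary \ref{cor:matchingbound} the subgraph $H$ consisting of the matching edges lives on the $2\MM(G)$ matched vertices, and Proposition \ref{prop:pm} applies to it with $n'=2\MM(G)$.

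I expect no real obstacle beyond these checks. The only step that is not bookkeeping is invoking König's theorem, which is standard and which I will cite.
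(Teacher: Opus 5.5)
Your proposal is correct and follows the paper's proof: $\cmm\geq 2d$ from Lemma~\ref{lem:independentset}, $\cmm\leq 2(n-\MM(G))$ from Corollary~\ref{cor:matchingbound}, and K\"onig's theorem giving $d=n-\MM(G)$. Your added checks on isolated vertices and on applying Proposition~\ref{prop:pm} to the matching subgraph are accurate but go beyond what the paper states.
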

\begin{proof}
	By Lemma \ref{lem:independentset}, we have $\cmm \geq 2d$. 
	For the other inequality, from K\"oning Theorem we have that $d=n-\mathrm{m}(G)$, hence by applying Corollary \ref{cor:matchingbound} the assertion follows. 
\end{proof}

\begin{remark}\label{rem:nearlyperfectmatchings}
	By Theorem \ref{the:oddcycles}  and Theorem \ref{the:bipartite} cycles have always $\cmm=n$ with $n$ even or odd. For paths instead the $\cmm$ is $n$ if $n$ is even, and it is $n+1$ if $n$ is odd. In fact, a path on $2k+1$ vertices has a maximal independent set of $k+1$ vertices. 
\end{remark}

\section{General bounds for $\cmm$}\label{sec:bounds}
We start this section with the following
\begin{definition}\label{def:CI}
We define a $CI$-matching $P$ a pair $(A,B)$ where $A$ is an independent set of vertices $G$ and $B$ is a set of disjoint cliques of $G$ such that if $i\in A$ and $j\in V(B)$ then $\{i,j\}\notin E(G)$, where $V(B) := \bigcup\limits_{C \in B} V(C) $.
Given a $CI$-matching $P$ we call $a_P=|A|$ and $b_P=|V(B)|$ and we set $V(P)=A \cup V(B)$. When $A= \emptyset$ (resp. $B=\emptyset$), we simply write $P=B$ (resp. $P=A$).
\end{definition}
\begin{proposition}\label{pro:lowupbound}
	Let $G$ be a graph on $n$ vertices, $P=(A,B)$ be a $CI$-matching. Then
	\begin{equation}\label{eq:ci-bound}
		2a_P+b_P\leq \cmm\leq 2n-b_P 
	\end{equation}
\end{proposition}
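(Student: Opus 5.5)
For the lower bound $2a_P+b_P\le \cmm$ I will exhibit, for each $k$, a monomial of degree $2a_P(k-1)+b_P(k-1)$ outside $(\mm^{[k]},J_G)$. Write $A=\{i_1,\ldots,i_a\}$ and take
\[
u=\prod_{i\in A}(x_iy_i)^{k-1}\prod_{j\in V(B)}x_j^{k-1}.
\]
Clearly $u\notin\mm^{[k]}$, since every exponent is at most $k-1$. It remains to check $u\notin J_G+\mm^{[k]}$.

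The cleanest route is to verify that $u$ is not divisible by any leading term of the Gr\"obner basis $\mathcal M\cup\mathcal B$ of Proposition~\ref{pro:groeb}. Suppose first that $w\in\mathcal M$ with $\deg w>k$ and $w\mid u$. Then $G_{V(w)}$ is connected and $w$ is divisible by some $x_i$ and some $y_j$. Since $V(w)\subseteq V(u)=A\cup V(B)$ and there are no edges from $A$ to $V(B)$ nor inside $A$, connectivity forces $V(w)$ to be a single vertex of $A$ or to lie inside $V(B)$. In the second case $w$ has no $y$'s, a contradiction. In the first case $w=x_i^ay_i^b$ with $a+b>k$ and $a,b\le k-1$; I need to rule this out, and I expect to argue from the proof of Proposition~\ref{pro:groeb} that such $w$ arise as multiples of $u_\pi x_iy_j$ with $i\neq j$, hence involve two vertices.

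For the binomial leading terms $x_iy_ju_\pi$, divisibility by $u$ again needs $y_j$ with $j\in V(u)$, so $j\in A$. The path $\pi$ then connects $j$ to $i$ inside $V(u)$, which is impossible because $j$ is isolated in $G_{V(u)}$. Hence $u$ is a standard monomial, $v_G(k)\ge(2a_P+b_P)(k-1)$, and dividing by $k$ gives the bound. If this Gr\"obner argument proves fragile, I will instead use the tensor decomposition: the graph $G_{A\cup V(B)}$ contains the disjoint union of isolated vertices and cliques, and $J_G$ restricted appropriately behaves like Lemma~\ref{lem:independentset} combined with Lemma~\ref{lem:geq}. The monomial argument is more direct, though.

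For the upper bound, let $H$ be the subgraph consisting of the disjoint cliques in $B$, with $n'=b_P$ vertices. By Lemma~\ref{lem:subgraph}, $\cmm\le c_H(\mm')+2(n-b_P)$. By Lemma~\ref{lem:tp}, $c_H(\mm')=\sum_{C\in B}c_C$. So it suffices to show $c_{K_m}=m$. The bound $\ge m$ is Lemma~\ref{lem:geq}. For $\le m$, observe that $K_m$ has a Hamiltonian cycle when $m\ge3$, so applying Lemma~\ref{lem:subgraph} with $H$ that cycle on all $m$ vertices together with Theorem~\ref{the:oddcycles} gives $c_{K_m}\le m$. The cases $m=1,2$ are trivial or Corollary~\ref{cor:K2}. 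Summing gives $c_H=b_P$, hence $\cmm\le b_P+2n-2b_P=2n-b_P$.

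The main obstacle is the Gr\"obner-basis step in the lower bound, specifically excluding monomial generators supported on a single vertex of $A$.
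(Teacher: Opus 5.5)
Your proposal follows the paper's proof. For the lower bound you use the same monomial $\prod_{i\in A}(x_iy_i)^{k-1}\prod_{j\in V(B)}x_j^{k-1}$, certified as standard via Proposition~\ref{pro:groeb}. For the upper bound you apply Lemma~\ref{lem:subgraph} and Lemma~\ref{lem:tp} to the disjoint union of the cliques of $B$; the paper also keeps the vertices of $A$ in $H$, which changes nothing. You are more careful than the paper in two places. You treat the leading terms $x_iy_ju_\pi$ of $\mathcal B$ explicitly. You also justify $c_{K_m}(\mm)=m$ through a Hamiltonian cycle and Theorem~\ref{the:oddcycles}; the paper uses this tacitly, and for $m\ge 3$ it is a special case of Theorem~\ref{thm:2fac}.

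Two small points remain.

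First, the single-vertex case you flag closes as you expect. In the proof of Proposition~\ref{pro:groeb}, every monomial of $\mathcal M$ outside $\mm^{[k]}$ comes from some $S(w,u_\pi f_{ij})$. That is a multiple of the trailing term $u_\pi x_jy_i$ with $i\neq j$, and later reductions by $\mathcal B$ do not change the support, so the support always contains an edge. You can also bypass Gr\"obner bases entirely. Let $\phi$ be the $K$-algebra endomorphism of $S$ that fixes $x_j$ for $j\in A\cup V(B)$ and $y_j$ for $j\in A$, and sends every other variable to $0$. It kills each $f_{ij}$: an edge with no endpoint in $A$ loses both $y$'s, and an edge with an endpoint in $A$ has its other endpoint outside $A\cup V(B)$. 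It also sends $\mm^{[k]}$ into $\mm^{[k]}$. So $u=\phi(u)\in(\mm^{[k]},J_G)$ would give $u\in\mm^{[k]}$, which is false.

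Second, the case $m=1$ is not ``trivial'' in the sense you need: $c_{K_1}(\mm)=2$, the diagonal threshold of $K[x_1,y_1]$. If singleton cliques were allowed in $B$, the upper bound would actually fail: a single vertex with $B=\{\{1\}\}$ gives $2n-b_P=1<2$. So the statement must be read with cliques on at least two vertices, as the paper implicitly does; simply drop $m=1$.
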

\begin{proof}
	Let $u$ be the following monomial
	\[
	u=\prod_{i\in A} x_iy_i\prod_{j\in V(B)}x_j,  
	\]
	then $u^{k-1}\notin (\mm^{[k]},J_G)$ from Proposition \ref{pro:groeb}. Hence, 
	\[
	\cmm\geq 2a_P+b_P.
	\]
	Let $H$ be the subgraph of $G$ such that 
	\[
	V(H)=A\cup V(B),\text{ and   }E(H)=E(B).
	\] 
	Then by Lemma \ref{lem:subgraph} we have
	\[
	c_G(\mm)\leq c_H(\mm')+2(n-(a_P+b_P)). 
	\]
	From Lemma \ref{lem:tp} we have $c_H(\mm')=2a_P+b_P$. That is
	$c_G(\mm)\leq 2n-b_P$.
\end{proof}
\begin{remark} We observe that the lower bound of \eqref{eq:ci-bound} can be improved in general taking as value $\max\{n,2a_P+b_P\}$, where $n$ is induced by the monomial $u=\prod_{i\in V(G)}x_i^{k-1}$. 
\end{remark}

Given two different CI-matchings $P$ and $P'$ on the same graph $G$, one has 
	\[
	\max\{2a_P+b_P,2a_{P'}+b_{P'}\} \leq \cmm \leq 2n- \max\{b_P,b_{P'}\},
	\]
hence the following definition naturally arises 
\begin{definition}
Let $P$ be a CI-matching on a graph $G$.
\begin{enumerate}
\item We say that $P$ is \textit{Left-maximal (L-maximal)} if $2a_{P}+b_P \geq 2a_{P'}+b_{P'}$ for any CI-matching $P'$ of $G$.
\item We say that $P$ is \textit{Right-maximal (R-maximal)} if $2n-b_P \leq 2n-b_{P'}$ (equivalently $b_P \geq b_{P'}$) for any CI-matching $P'$ of $G$.
\end{enumerate} 
\end{definition}

\begin{remark}
Let $G$ be a graph and let $Q$ be a maximal clique matching of $G$ such that $|V(Q)|=b(G)$. Then $Q$ is a R-maximal CI-matching of $G$.
\end{remark}
We observe that there are CI-matchings $P,P'$ such that $2a_P+b_P=2a_{P'}+b_{P'}$ as in the following
\begin{example}\label{exa:equi}
Take a block star graph $G$ that has only one whisker, e.g.
\begin{figure}[H]
\begin{tikzpicture}
\filldraw (1,1.73) circle (\rad) node [anchor=east]{1};
\filldraw (0,0) circle (\rad) node [anchor=north]{2};
\filldraw (2,0) circle (\rad) node [anchor=north]{3};
\filldraw (0,3.46) circle (\rad) node [anchor=south]{4};
\filldraw (2,3.46) circle (\rad) node [anchor=south]{5};
\filldraw (2.5,1.73) circle (\rad) node [anchor=west]{6};
\draw (0,0)--(2,0)--(1,1.73)--cycle;
\draw (0,3.46)--(2,3.46)--(1,1.73)--cycle;
\draw (2.5,1.73)--(1,1.73);
\end{tikzpicture}
\end{figure}
Then, $P=(\{6\},\{\{2,3\},\{4,5\}\})$ and $P'=\{\{2,3\},\{4,5\},\{1,6\}\}$ are CI-matchings of $G$ and $6=2a_P+b_P=2\cdot 1 +4 =2\cdot 0+ 6=2a_{P'}+b_{P'}$.
\end{example}

\begin{definition}
Two CI-matchings $P,P'$ are said \emph{equivalent} if $2a_P+b_P=2a_{P'}+b_{P'}$. Moreover, a CI-matching $P$ is said \emph{reduced} if there is no equivalent CI-matching $P'$ with $b_{P'} > b_P$.
\end{definition}
In particular, the CI-matching $P$ in Example \ref{exa:equi} is equivalent to the CI-matching $P'$ that is reduced.
\begin{lemma}\label{lem:notred}
Let $G$ be a connected graph and let $P$ be a $CI$-matching of $G$ with $a_P >0$. If there exists $v \in V(G)\setminus V(P)$ such that $|N(v)\cap A_P|=1$, then $P$ is not reduced.
\end{lemma}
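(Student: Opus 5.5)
We construct from $P$ an equivalent CI-matching $P'$ with $b_{P'}>b_P$. Write $P=(A,B)$ with $A=A_P$, and let $v\in V(G)\setminus V(P)$ be such that $N(v)\cap A=\{w\}$. Define
\[
P'=(A\setminus\{w\},\,B\cup\{\{v,w\}\}).
\]
The edge $\{v,w\}$ is a clique of $G$, namely a copy of $K_2$. Then $a_{P'}=a_P-1$ and $b_{P'}=b_P+2$. Hence
\[
2a_{P'}+b_{P'}=2a_P-2+b_P+2=2a_P+b_P,
\]
so $P'$ is equivalent to $P$ and $b_{P'}>b_P$. This shows $P$ is not reduced, provided $P'$ really is a CI-matching.

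To check that $P'$ is a CI-matching, verify the conditions of Definition \ref{def:CI} in order.

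\begin{enumerate}
\item $A\setminus\{w\}$ is independent, since it is a subset of the independent set $A$.
\item The cliques of $B\cup\{\{v,w\}\}$ are pairwise disjoint. The vertex $w$ lies in $A$, which is disjoint from $V(B)$. The vertex $v$ lies outside $V(P)$, so it is not in $V(B)$ either.
\item No vertex $i\in A\setminus\{w\}$ is adjacent to any $j\in V(B)\cup\{v,w\}$. There are three cases:
\begin{itemize}
\item For $j\in V(B)$, this holds because $P$ is a CI-matching.
\item For $j=w$, this holds because $A$ is independent.
\item For $j=v$, this holds because $N(v)\cap A=\{w\}$, so $v$ has no neighbour in $A\setminus\{w\}$.
\end{itemize}
\end{enumerate}

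The only step needing care is the case $j=v$ in the last check, and it is exactly where the hypothesis $|N(v)\cap A|=1$ is used. Note that $v\notin A$ and $v\notin V(B)$ are both needed. They are guaranteed by $v\in V(G)\setminus V(P)$. Connectedness of $G$ and $a_P>0$ are not otherwise needed beyond ensuring that such a $w$ exists.
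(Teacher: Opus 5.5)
Your proof is correct and is the paper's argument: the paper also sets $P'=(A_P\setminus\{w\},\,B_P\cup\{\{v,w\}\})$ and observes that $2a_{P'}+b_{P'}=2a_P+b_P$ while $b_{P'}=b_P+2$. Your explicit check that $P'$ is a CI-matching, in particular using $N(v)\cap A_P=\{w\}$ to rule out edges from $A_P\setminus\{w\}$ to $v$, fills in a step the paper only asserts.
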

\begin{proof}
Let $|N(v)\cap A_P|=\{w\}$. If
\[
P'= (A_P\setminus \{w\},B_{P}\cup \{\{v,w\}\}),
\]
then $P'$ is a CI-matching with $a_{P'}=a_P-1$ and $b_{P'}=b_{P}+2$. We have $2a_{P'}+b_{P'}=2a_P+b_P$ hence $P$ and $P'$ are equivalent with $b_{P'}>b_{P}$, hence $P$ is not reduced. 
\end{proof}

Lemma \ref{lem:notred} offers a way to construct a reduced CI-matching $P$.
\begin{algorithm} \ \\\textbf{Input:} a CI-matching $P$\\
\textbf{Output:} a reduced CI-matching \\
\begin{enumerate}
\item Let $F=\{v \in V(G)\setminus V(P): |N(v)\cap A_P|=1 \}$. If $F = \varnothing$, then return $P$. Else $F=\{v_1,\ldots,v_\ell\}$ for some $v_i \in V(G)$.
\item For $i=1,\ldots,\ell$, let $w_i$ be such that $N(v_i)\cap A_P=\{w_i\}$. Set
\[
P'=(A_{P}\setminus \{w_1,\ldots, w_\ell\}) \cup (B_P \cup  \{\{v_i,w_i\}_{i=1,\ldots, \ell}\}) 
\]
\item Set $P=P'$ and return to Step (1).
\end{enumerate}
\end{algorithm}
The algorithm terminates because $|V(G)\setminus V(P')|< |V(G) \setminus V(P)|$ and $a_{P'} < a_{P}$. For the aim of describing the vertices in an L-maximal CI-matching, we consider the following 
\begin{lemma}
%Let $G$ be a connected graph and let $P$ be an $L$-maximal matching of $G$. If $G$ has a whisker $\{v,w\}$ with $w$ free vertex, then $w\in V(P)$.

Let $G$ be a connected graph and let $P$ be an $L$-maximal matching of $G$. If $G$ has a whisker $\{v,w\}$ with $w$ free vertex, then there exist an $L$-maximal matching $P'$ with $w\in V(P')$.

\end{lemma}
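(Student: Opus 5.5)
Here ``$L$-maximal matching'' means $L$-maximal CI-matching, a whisker $\{v,w\}$ is an edge with $\deg(w)=1$, and ``free vertex'' means $w$ is the pendant vertex, adjacent only to $v$. Let $P=(A,B)$ be an $L$-maximal CI-matching with $w\notin V(P)$. The plan is to modify $P$ locally around $v$ into a CI-matching $P'$ with $w\in V(P')$ and $2a_{P'}+b_{P'}\geq 2a_P+b_P$. Since $P$ is $L$-maximal, equality then holds and $P'$ is also $L$-maximal. The argument splits into cases according to the position of $v$ with respect to $P$.

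\emph{Case 1: $v\notin V(P)$.} Then no vertex of $A$ is adjacent to $w$, since $N(w)=\{v\}$, and $w$ is not adjacent to any vertex of $V(B)$. So $(A\cup\{w\},B)$ is a CI-matching with value $2a_P+b_P+2$, contradicting $L$-maximality. Hence this case cannot occur.

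\emph{Case 2: $v\in A$.} Set $P'=(A\setminus\{v\},B\cup\{\{v,w\}\})$. The edge $\{v,w\}$ is a clique disjoint from $V(B)$. We must check that no vertex of $A\setminus\{v\}$ is adjacent to $v$ or $w$. This holds because $A$ is independent and $N(w)=\{v\}$. The value becomes $2(a_P-1)+b_P+2$, which equals the original value, so $P'$ is $L$-maximal and contains $w$. This is exactly the move of Lemma~\ref{lem:notred}.

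\emph{Case 3: $v\in V(C)$ for some clique $C\in B$.} Replace $C$ by $C\setminus\{v\}$, discarding it if it becomes empty, and add the clique $\{v,w\}$. The vertex $w$ has no neighbours in $A$, so the CI condition still holds. The cliques remain disjoint, and $C\setminus\{v\}$ is still a clique. Both $a$ and $b$ are unchanged, because $v$ is moved into the new clique and $w$ is added while one vertex of $C$ is removed... More precisely, $b$ changes by $-1+2=+1$, so the value increases by one, contradicting $L$-maximality. So this case cannot occur either. Alternatively, one can simply observe that the value does not decrease, which already suffices.

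The only delicate point is verifying the CI-matching conditions after each modification. In particular, in Case 3 we need that removing $v$ from $C$ creates no new adjacency between $A$ and $V(B')$. This is clear because we only remove vertices and add $w$, and $w$ has no neighbours in $A$. With this checked, in every possible case we obtain the required $P'$ (in fact only Case 2 can actually arise), which proves the statement.
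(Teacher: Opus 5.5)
\textbf{Gap in Case 3 when $C$ is an edge.} Your Cases~1 and~2 are correct. In Case~1, putting $w$ into $A$ is more careful than the paper's ``add the edge $\{v,w\}$'', which fails if $v$ has a neighbour in $A$. The gap is in Case~3 when the clique of $B$ containing $v$ is an edge $C=\{v,u_1\}$. This is exactly the case $m=2$, which takes up most of the paper's proof.

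Your move then leaves the one-vertex set $\{u_1\}$ in $B$, and that is not allowed. In the paper's framework the cliques of a CI-matching must have at least two vertices. This is implicit but forced, since otherwise Proposition~\ref{pro:lowupbound} fails. Its proof uses $c_{K_m}(\mm)=m$, which is false for $m=1$. Concretely, for the path $1-2-3$ the ``matching'' $B=\{\{1\},\{2\},\{3\}\}$ would give $c_G(\mm)\le 3$, whereas $c_G(\mm)=4$.

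So your count ``$b$ changes by $+1$'' is wrong here, and so is your conclusion that Case~3 never occurs. Take the $5$-cycle $u_1u_2u_3u_4u_5$ with the two extra edges $\{u_1,v\}$ and $\{v,w\}$. Then $P=(\emptyset,\{\{v,u_1\},\{u_2,u_3\},\{u_4,u_5\}\})$ has value $6$. A direct check shows that no CI-matching of this triangle-free graph on $7$ vertices does better. Hence $P$ is $L$-maximal, $v$ lies in an edge of $B_P$, and $w\notin V(P)$.

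\textbf{Repair.} Your closing remark that ``the value does not decrease'' already points to the fix. If $C=\{v,u_1\}$, set $P'=(A,(B\setminus\{C\})\cup\{\{v,w\}\})$, i.e.\ discard $u_1$ altogether. Then:
\begin{itemize}
\item the cliques remain disjoint, since $w\notin V(P)$;
\item the CI condition holds, since $N(w)=\{v\}$ and $v\notin A$, and dropping $u_1$ creates no new adjacency;
\item $a_{P'}=a_P$ and $b_{P'}=b_P$, so $P'$ is $L$-maximal and contains $w$.
\end{itemize}

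\textbf{Comparison with the paper.} With this correction, and your argument unchanged for $|C|\ge 3$, the proof is complete. It is considerably shorter than the paper's. In the case $m=2$ the paper follows a chain of edges $\{u_{2i},u_{2i+1}\}$ of $B_P$ until it reaches a whisker or a cycle. The direct swap makes that chain unnecessary.
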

\begin{proof}
%By contraposition, assume that $w \in V(G) \setminus V(P)$. It follows that $v \in V(P)$, otherwise we can add the edge $\{v,w\}$ to $P$. \\

We assume that $w \in V(G) \setminus V(P)$. It follows that $v \in V(P)$, otherwise we can add the edge $\{v,w\}$ to $P$, namely $P$ is not $L$-maximal. \\
If $v \in A_P$, we can replace $v$ with $w$ obtaining the thesis.\\
%If $v \in V(B_P)$ and let $K_m$ be the clique containing $v$. If $m  > 2$, then we consider $K_m\setminus \{v\}$ and $\{v,w\}$ obtaining a bigger CI-matching.\\
If $v \in V(B_P)$, let $K_m$ be the clique containing $v$. If $m  > 2$, then we consider 
\begin{equation}\label{eq:m>2}
P'=(A_P,B_P\setminus\{K_m\} \cup\{K_m\setminus \{v\},\{v,w\}\}).
\end{equation}
We observe that $P$ is not $L$-maximal since $a_P=a_{P'}$ and $b_{P'}=b_P+1$. If $m=2$, we construct a new $L$-maximal partition $P'$ with $w\in V(P')$. Obviously $P'$ does not contain the edge $K_2=\{v,u_1\}$ of $B_P$. We focus on the vertex $u_1$. If $u_1$ can be also added to $A_{P'}$, then we obtain a CI-matching with a larger $A_{P'}$. Namely,
\[
P'=(A_P \cup \{w,u_1\},B_P\setminus\{K_2\})
\] 
In this case $a_{P'}=a_P+2$ and $b_P=b_{P'}-2$, and we have that $P$ is not $L$-maximal, since $2a_{P'} + b_{P'}=2a_P + b_P+2$. Hence, for the $L$-maximality of $P$ and from Definition \ref{def:CI}, $u_1$ is adjacent to a vertex $u_2 \in B_P$. Here we can apply the same consideration for the vertex $v$. Namely, there is a $K_m\in P$ with $m\geq 2$ with $u_2\in K_m$. If $m>2$ we use a similar argument to the one related with equation \eqref{eq:m>2}. That is we define 
\begin{equation}\label{eq:m>2}
	P'=(A_P,B_P\setminus\{K_m\}\cup\{K_m\setminus \{u_2\},\{v,w\},\{u_1,u_2\}\}),
\end{equation}
obtaining that $P$ is not $L$-maximal.

Therefore, setting $u_0=v$, being $P$ $L$-maximal, by repeating the previous argument we have edges  $$L=\{u_{2i},u_{2i+1}\}\in P \mbox{ for }i=0,\ldots,\ell,$$
 where either $u_{2\ell+1}$ is a free vertex of a whisker, or lies on a cycle.  If $u_{2\ell+1}$ lies on a path or on an even cycle, then we put $\{w,u_1,u_{3}\ldots,u_{2\ell+1}\}$ in $A_P$, that is we remove $\ell+1$ edges in $L$ from $B_P$ and add $\ell+2$ vertices to $A_P$ obtaining a bigger matching. Hence $P$ is not $L$-maximal.  If $u_{2\ell+1}$ lies on an odd cycle,  then we put $\{w,u_1,u_{3}\ldots,u_{2\ell-1}\}$ in $A_P$, that is we remove $\ell$ edges from $B_P$ and add $\ell$ vertices to $A_P$, obtaining an equivalent CI-matching containing $w \in A_P$.

\end{proof}

\begin{lemma}\label{lem:LRmax}
Let $G$ be a connected graph and let $P$ be a $L$-maximal matching of $G$ with $a_P >0$. Then $P$ is not $R$-maximal.
\end{lemma}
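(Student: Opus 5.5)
The plan is to exhibit directly a CI-matching $P'$ with $b_{P'}>b_P$. Since $R$-maximality only compares the quantities $b$, we are free to discard the whole independent part, and this makes the construction easy. We will in fact use only $a_P>0$ and connectedness; the $L$-maximality of $P$ plays no role.

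First, since $a_P>0$, choose a vertex $w\in A_P$. Because $G$ is connected with at least two vertices, $w$ has a neighbour $v$. We claim that $v\notin V(P)$.
\begin{enumerate}
\item $v\notin A_P$, because $A_P$ is an independent set and $\{v,w\}\in E(G)$.
\item $v\notin V(B_P)$, because Definition \ref{def:CI} forbids edges between $A_P$ and $V(B_P)$, and $w\in A_P$.
\end{enumerate}
Likewise $w\notin V(B_P)$, since $A_P\cap V(B_P)=\emptyset$, which is implicit in the definition.

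Next define
\[
P'=(\emptyset,\;B_P\cup\{\{v,w\}\}).
\]
The edge $\{v,w\}$ is a clique $K_2$ of $G$, and it is disjoint from every clique in $B_P$ because neither $v$ nor $w$ lies in $V(B_P)$. Hence $B_P\cup\{\{v,w\}\}$ is a set of disjoint cliques. With $A=\emptyset$ the non-adjacency condition of Definition \ref{def:CI} is vacuous, so $P'$ is a CI-matching. It satisfies $b_{P'}=b_P+2>b_P$, so $P$ is not $R$-maximal.

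The only point requiring care is the degenerate case. If $G$ is a single vertex, then $w$ has no neighbour and the argument fails. I would exclude this case explicitly, or note that for a single vertex the statement depends on whether one-vertex cliques are allowed in $B$. If they are, then $P'=\{\{w\}\}$ gives $b_{P'}=1>0=b_P$, and the statement still holds. Apart from this, there is no real obstacle; the main step is the observation above that a neighbour of a vertex of $A_P$ lies outside $V(P)$.
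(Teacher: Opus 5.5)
Your proof is correct and is essentially the paper's own argument. Both pick a vertex of $A_P$ and a neighbour of it, note that the neighbour lies outside $V(B_P)$, and append this edge to $B_P$ to get a clique matching with $b_{P'}=b_P+2$. Your write-up is in fact slightly more careful than the paper's: the paper only records $b_{P'}\geq b_P$, whereas strict inequality is what non-$R$-maximality requires, and it tacitly assumes $G$ is not a single vertex, a case you correctly flag.
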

\begin{proof}
Let $P=(A, B)$ as in Definition \ref{def:CI}. Since $a_P>0$, then for $v\in A$ we have $N(v)\cap V(B) = \emptyset$. Hence, for $w \in N(v)$ we have that
\[
P'=B \cup \{\{v,w\}\}
\]
is a CI-matching of $G$ such that $b_{P'} \geq b_P$.
\end{proof}
\begin{remark}\label{rmk:alg}
Even though the statement of Lemma \ref{lem:LRmax} is obvious, its proof gives a way to enlarge the clique matching $B$  of a CI-matching $P=(A,B)$ up to a larger clique-matching $B'$. In fact, let $P$ be a $L$-maximal matching, and we take $P_1=B \cup \{\{v,w\}\}$ as in the proof. Then, we take a vertex $v_1 \in A_P \setminus N(w)$ and we have $N(v_1)\cap V(P_1) = \varnothing$, hence for $w_1 \in N(v_1)$ we have 
\[
P_2=P_1 \cup \{\{v_1,w_1\}\}
\]
and we proceed in this way until we run out of vertices in $A_P$ that are not adjacent to a clique in the matching. Hence we build a clique matching $P'=B'$ such that 
\begin{enumerate}
\item $B\subset B'$ by construction;
\item $A'= V(G)\setminus V(B')$ is independent set, because if two vertices are adjacent, then an edge can be added to $B'$;
\end{enumerate}
hence we let $a'=|A'|$ and $b'=|B'|$. Hence, $n=a'+b'$ and $2n-b'=b'+2a'$, hence $2a_P+b_P\leq \cmm \leq 2a'+b'$.
\end{remark}
\begin{proposition}\label{prop:eq}
Let $G$ be a connected graph and let $P$ be a CI-matching of $G$ with $a_P >0$ and let $A',B',a',b'$ as in Remark \ref{rmk:alg}. Then the following are equivalent:
\begin{enumerate}
	\item $A' \subset A_P$;
	\item $2a_P+b_P=2a'+b'$
\end{enumerate}
\end{proposition}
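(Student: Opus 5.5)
The plan is to reduce both conditions to statements about how many vertices the construction of Remark \ref{rmk:alg} covers, and then compare them by counting. Write $P=(A,B)$ with $A=A_P$, and let $R=V(G)\setminus V(P)$ and $r=|R|$, so that $n=a_P+b_P+r$.

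First I record the structure of $B'$. In Remark \ref{rmk:alg} each new clique is an edge $\{v_i,w_i\}$ with $v_i\in A$ and $w_i\in N(v_i)$. By Definition \ref{def:CI}, $v_i$ has no neighbour in $V(B)$. Since $A$ is independent, $v_i$ has no neighbour in $A$ either. Hence $w_i\in R$.

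So $B'$ is $B$ together with edges that each use exactly one vertex of $A$ and exactly one vertex of $R$. I will state this as a preliminary observation, reading the construction as only ever adding such edges, since that is how the remark builds $B'$. If extra edges inside $R$ could also be added to make $A'$ independent, this bookkeeping would break, so the choice has to be fixed explicitly.

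Set $x=|A'\cap A|$ and $y=|A'\cap R|$, so that $a'=x+y$ and $A'=(A\cup R)\setminus V(B'\setminus B)$. The bijection between the $A$-endpoints and the $R$-endpoints of the added edges gives
\[
a_P-x=r-y .
\]

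Next I rewrite condition (2). Here I take $b'=|V(B')|$, the number of vertices covered; the remark writes $b'=|B'|$, but only the vertex count makes $n=a'+b'$ true. Then $n=a'+b'$ gives $2a'+b'=2n-b'$. Using $n=a_P+b_P+r$ and $b'=n-a'$, equation (2) becomes
\[
2a_P+b_P=n+a'=a_P+b_P+r+a',
\]
that is, $a'=a_P-r$.

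Combining this with the bijection identity: from $x+y=a_P-r$ and $x=a_P-r+y$ we get $2y=0$, so $y=0$. Conversely, if $y=0$, the bijection gives $x=a_P-r$, hence $a'=a_P-r$. Therefore (2) holds if and only if $y=0$, i.e.\ $A'\cap R=\emptyset$. Since $A'\subseteq A\cup R$, this is exactly $A'\subset A_P$, which is (1).

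The only delicate point is the structural observation that every clique added in passing from $B$ to $B'$ is an edge with one endpoint in $A_P$ and one in $R$. The counting itself is routine once that is in place.
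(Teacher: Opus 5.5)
Your proof is correct and follows essentially the same counting argument as the paper, which also uses the partitions $V(G)=A_P\cup V(B_P)\cup F=A'\cup V(B')$, treats $b'$ as a vertex count, and relies on every clique of $B'\setminus B_P$ being an edge from $A_P$ to $F$; it obtains (2)$\Rightarrow$(1) from $f=s$ and (1)$\Rightarrow$(2) from the bijection between $A_P\setminus A'$ and the new edges. Your parameter $y=|A'\cap R|$ handles both directions in a single computation. Your caveats, that $b'=|V(B')|$ and that no edges are added inside $R$, state explicitly assumptions the paper's proof uses without saying so.
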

\begin{proof}
	By Remark \ref{rmk:alg} and setting $F=V(G)\setminus V(P)$ with $f=|F|$, we have the following partitions of the vertices of $G$
	\begin{equation}\label{eq:partitions}
	V(G)=A_P\cup V(B_P)\cup F=A'\cup V(B'),
	\end{equation}
	that induce the equation
	\[
	n=a_P+b_P+f=a'+b'.
	\]
	$(1)\Rightarrow (2).$ Since $A' \subset A_P$  there is a bijection between the vertices in $A_P\setminus A'$ and the cliques of cardinality 2, namely the edges of $B'\setminus B_P$, therefore 
	\[
	b'-b_P=2(a_P-a').
	\]
	And the assertion follows.\\
	$(2)\Rightarrow (1).$ Let $s$ be the number of edges $\{v,w\}$ with $v \in A_P$ and $w \in F$ such that $\{v,w\}$ is in $B'$ as in Remark \ref{rmk:alg}.  It follows that $b'=b_{P}+2s$, and
	\[
	 a'=n-b'=(a_P+b_P+f)-(b_{P}+2s)=a_P+f-2s.
	\]
	Since $2a_P+b_P=2a'+b'$, then 
	\[
	2a_P+b_P=2a_P+2f-4s+b_{P}+2s \ \  \Rightarrow \ \  0=2f-2s  \ \ \Rightarrow \ \  f=s.
	\]
	It follows that all the vertices of $F$ are in $B'$ and hence $A' \subset A_P$.
\end{proof}
\begin{corollary}\label{cor:LR}
Let $G$ be a connected graph satisfying one of the two equivalent conditions of Proposition \ref{prop:eq}. Then $G$ has a CI-matching $P$ that is $L$-maximal and $R$-maximal and 
\[
\cmm = 2n - b(G)
\]
\end{corollary}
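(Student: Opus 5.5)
The plan is to squeeze $\cmm$ between the two bounds of Proposition \ref{pro:lowupbound}, using the CI-matching $P$ supplied by the hypothesis and the clique matching $B'$ of Remark \ref{rmk:alg}.

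\textbf{Step 1: the lower bound.} Let $P=(A_P,B_P)$ be a CI-matching satisfying the equivalent conditions of Proposition \ref{prop:eq}, so $2a_P+b_P=2a'+b'$. Proposition \ref{pro:lowupbound} applied to $P$ gives
\[
2a'+b' = 2a_P+b_P \leq \cmm .
\]

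\textbf{Step 2: the upper bound.} The pair $P'=(\varnothing,B')$ is itself a CI-matching, in fact a clique matching. The upper bound of Proposition \ref{pro:lowupbound} gives $\cmm\leq 2n-b'$. By Remark \ref{rmk:alg}, $n=a'+b'$, hence $2n-b'=2a'+b'$. Combining with Step 1,
\[
\cmm = 2a'+b' = 2n-b' .
\]

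\textbf{Step 3: identify $b'$ with $b(G)$.} For any clique matching $Q$, Proposition \ref{pro:lowupbound} gives $\cmm\le 2n-|V(Q)|$, that is $|V(Q)|\le 2n-\cmm = b'$. Hence $b'\ge b(G)$. Since $B'$ is itself a clique matching, $b'\le b(G)$, so $b'=b(G)$ and $\cmm=2n-b(G)$.

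\textbf{Step 4: a simultaneously $L$- and $R$-maximal CI-matching.} Take $P'=B'$. For any CI-matching $P''$,
\[
2a_{P''}+b_{P''}\le \cmm = 2a'+b' ,
\]
so $B'$ is $L$-maximal. The inequality $b_{P''}\le 2n-\cmm=b'$ is exactly $R$-maximality. The CI-matching $P$ is also $L$-maximal, since it attains $\cmm$.

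\textbf{Main point of care.} The delicate step is confirming that $b'$ in Remark \ref{rmk:alg} denotes the number of vertices $|V(B')|$ rather than the number of cliques. With the vertex count, $n=a'+b'$ holds, and the whole argument rests on that identity.
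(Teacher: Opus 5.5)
Your Steps 1--3 are correct, and they are the argument the paper leaves implicit; it states the corollary without proof. Remark \ref{rmk:alg} gives $2a_P+b_P\le \cmm\le 2n-b'=2a'+b'$, and condition (2) of Proposition \ref{prop:eq} collapses this chain. Your Step 3 then shows $b'=b(G)$, using $|V(Q)|\le 2n-\cmm=b'$ for every clique matching $Q$; the paper does not spell this out. You never need $A'$ to be independent, only $n=a'+b'$, which holds because $A'$ is defined as the complement of $V(B')$.

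The gap is in Step 4, where you conclude that $B'$ is $L$-maximal from $2a_{P''}+b_{P''}\le \cmm=2a'+b'$. As a CI-matching, $B'$ is $(\varnothing,B')$, so its $L$-value is $b'$, not $2a'+b'$. You cannot use $(A',B')$ instead, because it is not a CI-matching: if $A'$ and $V(B')$ are both nonempty, connectivity of $G$ and independence of $A'$ force an edge between them. So $B'$ is $L$-maximal only when $a'=0$.

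No argument can repair this, because the literal clause ``one CI-matching that is both $L$- and $R$-maximal'' is false in general. By Lemma \ref{lem:LRmax} (for $n\ge 2$), such a $P$ would have $a_P=0$. Then $\cmm=b_P=b(G)$, and together with $\cmm=2n-b(G)$ this forces $b(G)=n$. Concretely, take the path $w_1,v,w_2$ with $P=(\{w_1,w_2\},\varnothing)$, $B'=\{\{w_1,v\}\}$ and $A'=\{w_2\}\subseteq A_P$. It satisfies the hypothesis and $\cmm=4=2n-b(G)$ by Theorem \ref{the:bipartite}. Yet the only CI-matching with $L$-value $4$ is $P$ itself, and it has $b_P=0<2=b(G)$.

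So the first clause of the statement has to be read in split form: $P$ is $L$-maximal, $B'$ is $R$-maximal, and the two bounds of Proposition \ref{pro:lowupbound} coincide. This is also how the paper's subsequent bipartite remark and the $C_5$ example use it. Your other two sentences in Step 4 prove exactly this: $P$ attains $\cmm$, and $b_{P''}\le 2n-\cmm=b'$ for every CI-matching $P''$. Delete the claim that $B'$ is $L$-maximal and state the conclusion in that form.
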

\begin{remark}
If $G$ is bipartite, according to Theorem \ref{the:bipartite}, we have $\cmm=2d$, where $d$ is the cardinality of a maximum independent set $A$. We observe that $A$ induces an $L$-maximal matching and from K\"onig theorem, $2d=2(n-m(G))=2n-b(G)$, because the maximal matching is a clique matching of $G$ consisting of edges. Hence, the equality of Corollary \ref{cor:LR} holds for bipartite graphs.
\end{remark}
We provide an example of graph that does not admit a CI-matching that is both $L$-maximal and $R$-maximal.
\begin{example}
Let $G$ be the cycle $C_5$ (see Figure \ref{fig:C5}) we have that a maximal CI-matching is $P=(\{1\}, \{\{3,4\}\})$, hence $2a_P+b_P=4$. By applying the algorithm in Remark \ref{rmk:alg}, we obtain $P'=\{\{1,2\},\{3,4\}\}$ and $A'=V(G)\setminus V(P')=\{5\}$, hence $2a'+b'=6$. This happens because $2$ and $5$ are not vertices in the matching $P$, and in $P'$ we can only include one of them. 
\begin{figure}[h]
\begin{tikzpicture}[scale=1.5]

% Vertici del ciclo C5
\node[circle,fill=black,inner sep=1.5pt,label=above:$1$] (v1) at (90:1) {};
\node[circle,fill=black,inner sep=1.5pt,label=right:$2$] (v2) at (18:1) {};
\node[circle,fill=black,inner sep=1.5pt,label=below:$3$] (v3) at (-54:1) {};
\node[circle,fill=black,inner sep=1.5pt,label=below:$4$] (v4) at (-126:1) {};
\node[circle,fill=black,inner sep=1.5pt,label=left:$5$] (v5) at (162:1) {};

% Archi del ciclo
\draw (v1)--(v2)--(v3)--(v4)--(v5)--(v1);

\end{tikzpicture}
\caption{A cycle $C_5$}\label{fig:C5}
\end{figure}
\end{example}

\section{Block graphs and other families}\label{sec:blocks}
In this section we analyse block graphs, proving that $\cmm = 2n- b(G)$ and other families of graphs such as graphs with 2-factors and cycles with whiskers.
Let $G$ be a graph on the vertex set $V(G)=[n]$ and consider $v\notin V(G)$. The {\em join} of $v$ on $G$, denoted by $v \ast G$, is the graph on the vertex set $V(G)\cup\{v\}$ and edge set $E(G)\cup\{\{v,w\}\ :\ w\in V(G)\}$.

We start by analysing block star graphs. Given a block star $G$ with cutpoint $v$ we write,
\[
G\setminus\{v\}=K_{n_1}\cup \ldots \cup K_{n_r} \cup \{w_1\} \cup \ldots \cup \{w_s\}
\]
where $n_i \geq 2$ for $i=1,\ldots,r$ and $w_j$ for $j=1,\ldots,s$ are isolated vertices.

\begin{proposition}\label{prop:BS}
Let $G$ be a block star in the previous setting. 
Then
\[
\cmm=\begin{cases}
n &\mbox{if } s = 0;\\
n+s-1 &\mbox{if } s \geq 1;
\end{cases}
\]
\end{proposition}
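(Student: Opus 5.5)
Block star: cutpoint $v$, cliques $K_{n_i}$ (each together with $v$ forms a block $K_{n_i+1}$), and whiskers $w_1,\dots,w_s$. So $n=1+\sum n_i+s$. The plan is to prove matching lower and upper bounds, both via Proposition \ref{pro:lowupbound}. For each bound we choose a suitable CI-matching $P=(A,B)$; the remark after that proposition also gives $\cmm\ge n$.

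\emph{Case $s=0$.} The graph is clique-coverable. Take $B=\{v\cup K_{n_1},K_{n_2},\dots,K_{n_r}\}$, which covers all $n$ vertices. Then $b_P=n$, and Proposition \ref{pro:lowupbound} gives $\cmm\le 2n-n=n$. Combined with Lemma \ref{lem:geq}, this gives $\cmm=n$.

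\emph{Case $s\ge 1$, upper bound.} Take $B=\{\{v,w_1\},K_{n_1},\dots,K_{n_r}\}$. This has $b_P=n-(s-1)$, so $\cmm\le 2n-b_P=n+s-1$.

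\emph{Case $s\ge1$, lower bound.} Take $A=\{w_1,\dots,w_s\}$ and $B=\{K_{n_1},\dots,K_{n_r}\}$. The $w_j$ are pairwise non-adjacent and not adjacent to any vertex of $V(B)$, since their only neighbour is $v$. So this is a CI-matching with $2a_P+b_P=2s+\sum n_i=2s+(n-1-s)=n+s-1$. Proposition \ref{pro:lowupbound} then gives $\cmm\ge n+s-1$, and equality follows.

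The only point that needs care is the validity of the CI-matching used for the lower bound, since the lower bound in Proposition \ref{pro:lowupbound} relies on Proposition \ref{pro:groeb}. Concretely, the monomial $u=\prod_{j}x_{w_j}y_{w_j}\prod_{h\in V(B)}x_h$ must have $u^{k-1}$ outside $(\mm^{[k]},J_G)$. This holds because the vertex $v$ is excluded, so in $G_{V(u)}$ each $w_j$ is isolated. Hence every connected component of $G_{V(u)}$ carrying both an $x$ and a $y$ variable is a single whisker vertex $w_j$, and no Gröbner basis monomial of degree $>k$ can divide $u^{k-1}$. I expect this verification to be the main (mild) obstacle. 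Beyond that, the proof is bookkeeping of the vertex count $n=1+\sum n_i+s$.
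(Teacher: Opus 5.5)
Your proof is correct and uses the same CI-matchings as the paper: $(\{w_1,\dots,w_s\},\{K_{n_1},\dots,K_{n_r}\})$ for the lower bound, and $\{K_{n_1},\dots,K_{n_r},\{v,w_1\}\}$ for the upper bound (or $(v\ast K_{n_1})\cup K_{n_2}\cup\dots\cup K_{n_r}$ when $s=0$). The paper reaches the equality by citing Proposition~\ref{prop:eq} via Remark~\ref{rmk:alg}, whereas you evaluate both sides of Proposition~\ref{pro:lowupbound} directly; this is the same argument with the vertex count $n=1+\sum n_i+s$ made explicit.
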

\begin{proof}
Observe that the second formula coincides with the first one for $s=1$. Hence, we prove that for $s=0,1$ we have $\cmm=n$. We observe that $K_{n_1} \cup \ldots \cup K_{n_r}$ is itself a clique matching of the non-isolated vertices of $G\setminus \{v\}$. Hence, if $s=0$, $(v \ast K_{n_1} ) \cup \ldots \cup K_{n_r}$ is a clique matching of $G$, and if $s=1$, $K_{n_1} \cup \ldots \cup K_{n_r} \cup \{v,w_1\}$ is a clique matching of $G$. \\
If $s\geq 1$ we observe that $n=n_1+\ldots + n_r+s+1$ and that $G\setminus \{v\}$ gives rise to a L-maximal CI-matching with $A_P=\{w_1,\ldots,w_s\}$, and hence $a_P=s$, and $b_P=n_{1}+\ldots+n_{r}=n-s-1$. By applying Remark \ref{rmk:alg}, one constructs the clique matching $B'=K_{n_1} \cup \ldots \cup K_{n_r} \cup \{v,w_1\}$ that has an associated $A'=V(G)\setminus B'=\{w_2,\ldots,w_s\} \subseteq A_P$. Therefore, from Proposition \ref{prop:eq} the assertion follows.
\end{proof}

\begin{proposition}\label{pro:algopartblockgraph}
	Let $G$ be a whisker-free connected block graph and let $\mathcal{F}=\{F_1,\ldots,F_r\}$ be the set of maximal cliques of $G$. Then the following algorithm computes a clique matching $\mathcal{P}$ of $G$.
\begin{enumerate}
	\item $\mathcal{P}= \{\}$;
	\item add$\_\PP$($F_1$, $\mathcal{F}\setminus\{F_1\}$);
	\end{enumerate}
where for $F \in \mathcal{F}$ and $\mathcal{F}' \subset \mathcal{F}$ the recursive function of point (2) is defined as follows:

add$\_\PP$($F$, $\mathcal{F}'$): 
	\begin{enumerate}
		\item[(a)] let $F'=V(F)\setminus V(\PP)$;
		\item[(b)] if $|F'|\geq 2$ then $\PP=\PP\cup \{F'\}$;
		\item[(c)] $\forall G\in \FF'$  with $G\cap F'\neq \emptyset$ then add$\_\PP$($G$, $\mathcal{F}\setminus\{G\}$).
	\end{enumerate}
\end{proposition}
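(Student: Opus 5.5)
We need to show three things: the recursion terminates, the members of $\PP$ are pairwise disjoint cliques of $G$ (so $\PP$ is a clique matching), and $\PP$ covers every vertex. The last point is what makes $\PP$ a clique matching of $G$ itself, i.e. a clique covering, as the later use for block graphs requires.

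\textbf{Cliques and disjointness.} Each set added to $\PP$ in step (b) is $F'=V(F)\setminus V(\PP)$ for a maximal clique $F$. A subset of a clique is a clique, so $F'$ is a clique of $G$. By construction $F'$ is disjoint from $V(\PP)$ at the moment it is added. Hence the elements of $\PP$ are pairwise disjoint, and $\PP$ is a clique matching in the sense of Section \ref{sec:bounds}.

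\textbf{Termination.} I read step (c) as recursing on the still-unvisited cliques, i.e. removing $G$ from the current family $\FF'$. Then each maximal clique is visited at most once and there are finitely many of them. I also read step (c) as recursing on cliques meeting $F$, not $F'$. Otherwise, when $F'=\emptyset$ the recursion would stop immediately and coverage could fail. With these readings the calls form a traversal of the block--cutpoint tree of $G$, starting from $F_1$.

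\textbf{Covering: every vertex lies in some element of $\PP$.} Since $G$ is connected, its block--cutpoint tree is connected, so every maximal clique $F_i$ is eventually visited. Take a vertex $x$ and a maximal clique $F$ containing it.
\begin{itemize}
\item Suppose $x$ is not a cutpoint. Then $F$ is its only block, and $x$ can be removed from $V(F)\setminus V(\PP)$ only by being placed into $\PP$ itself. So it suffices to show that when $F$ is processed, $|V(F)\setminus V(\PP)|\geq 2$ whenever $x$ is still uncovered.
\item Suppose $x$ is a cutpoint. It belongs to the first clique containing it that is visited, and it is absorbed there provided that step adds a set.
\end{itemize}

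\textbf{The main obstacle: ruling out a single leftover vertex.} The delicate case is a processed clique $F$ with $|V(F)\setminus V(\PP)|=1$, where that vertex $z$ would be skipped. Here I would use the whisker-free hypothesis.

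When $F$ (other than $F_1$) is visited, it was reached through a cutpoint $c$ shared with its parent block. At that moment the only vertices of $F$ already covered are the cutpoints through which earlier-visited blocks attach. In a DFS of the block tree, the only earlier-visited block meeting $F$ is the parent, via $c$, so only $c$ can be covered. Since $G$ is whisker-free, $F$ is not a $K_2$ attached at a leaf, and I expect to argue $|V(F)|\geq 3$ in the problematic situation. Then $|F'|\geq |V(F)|-1\geq 2$.

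The genuinely hard subcase is a $K_2$-block $F=\{c,z\}$ where neither endpoint is a leaf. Then $z$ is a cutpoint, and I would defer it: $z$ remains uncovered and is absorbed when the recursion continues into another block $G'\ni z$. Such a block exists because $z$ is a cutpoint. In $G'$, $z$ together with at least one non-covered vertex gives $|F'|\geq 2$, by the same argument applied inductively down the tree.

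I would formalize this by induction on the depth in the block tree, with the following invariant. Every vertex in an already-fully-explored subtree is covered, except possibly the attaching cutpoint, and the cutpoint is covered by its parent's step.

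For leaf blocks of the block tree, whisker-freeness gives $|V(F)|\geq 3$, so at least two vertices remain uncovered and everything in $F$ gets covered. This base case closes the induction.
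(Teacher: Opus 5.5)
Your argument is correct and follows the same route as the paper's proof: traverse the block tree from $F_1$, observe that at a block's first visit only its entry cutpoint can already be covered, let a vertex skipped in a $K_2$-block be absorbed whole by the next block at that cutpoint, and use whisker-freeness to guarantee that end blocks have at least three vertices; you simply give more detail than the paper does. Two small remarks: your worry that recursing on cliques meeting $F'$ (rather than $F$) could break coverage is unfounded, since at first visit $F'\supseteq V(F)\setminus\{c\}$, with $c$ the entry vertex, is nonempty and meets every child block; and your stated inductive invariant should allow the attaching cutpoint to be covered by the child's own step in the skipped-$K_2$ case, not by the parent's.
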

\begin{proof}
	Lines $(1)-(2)$. In the beginning we set $\PP$ as the empty set and we call the recursive function add$\_\PP$, choosing $F_1$ as the first clique to add to the matching. We observe that we can start by any maximal clique of the block graph.\\
	Then, we define the parameters of the recursive function add$\_\PP$:
	\begin{itemize}
		\item $F$, namely the clique to analyze and, if needed, from which we can take the subclique to add to the matching;
		\item $\mathcal{F}'$, the set of remaining facets to consider when the function is called. %We consider $\FF\setminus \FF'$ the descendants of $\FF'$.  
	\end{itemize} 
	We explain the lines $(a)$, $(b)$ and $(c)$ that are the the body of the function.
	
	Line $(a)$. Here we define the candidate to add to the matching, namely the set $F'$. We observe that we remove, if needed, the vertex of the clique, namely the vertex that $F$ shares with the parent that is in the set of vertices already in $\PP$. Observe that if $F$ is $F_1$ then $F'$ is exactly $F$, since the matching is empty at the beginning. But it is not the only case as we see in the next Line.
	 
	Line $(b)$. Here, we add $F'$ if and only if the clique under analysis has cardinality greater than $2$, or it has cardinality $2$ but its parent has cardinality $2$ and was skipped for this reason. This is the other case in which $F'$ is exactly $F$.
	
	Line $(c)$. Call the recursive function for all children of $F$.
	We observe that the recursive function ends whenever $F'$ is an end block, namely $F$ has no descendants. Since the block graph is whisker free by hypothesis $F'$ is added in line (b).
\end{proof}
\begin{corollary}
Let $G$ be a block graph with at most one whisker. Then 
\[
\cmm=n.
\]	
\end{corollary}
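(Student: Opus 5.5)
The plan is to prove $\cmm\ge n$ and $\cmm\le n$ separately. The lower bound is Lemma \ref{lem:geq}. For the upper bound it suffices to find a clique matching $\PP$ of $G$ covering all vertices but at most one. Indeed, if $V(\PP)=V(G)$, then $b(G)=n$ and Proposition \ref{pro:lowupbound} applied to $P=\PP$ gives $\cmm\le 2n-n=n$. If exactly one vertex $w$ is uncovered, $w$ has a neighbour $v$ lying in some clique $C\in\PP$. If $|C|\ge 3$, we replace $C$ by $C\setminus\{v\}$ and add the edge $\{v,w\}$, which is still a clique matching, now covering every vertex. If $|C|=2$, a covering of $n-1$ vertices only gives $2n-(n-1)=n+1$, so this case must be avoided by the construction below.

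\emph{Whisker-free case.} Here Proposition \ref{pro:algopartblockgraph} produces a clique matching $\PP$, and the claim to verify is that $\PP$ covers every vertex. I would argue by induction along the recursion tree. Every vertex belongs to some maximal clique $F$, and the recursion visits every block because $G$ is connected. When $F$ is processed, $F'=V(F)\setminus V(\PP)$ is either added to $\PP$ or has size $\le 1$. The problematic situation is $|F'|=1$, meaning $F$ shares its cut vertex with an already covered parent and has exactly one further vertex $u$. Since $G$ is whisker-free, $F$ is not an end block, so $u$ is a cut vertex. Any child $F_2$ of $F$ through $u$ then has $F_2'=V(F_2)$ containing $u$ and of size $\ge 2$, so it is added and covers $u$. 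Such a child exists because $F$ is not an end block. Hence every vertex is covered, $b(G)=n$, and $\cmm\le n$.

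\emph{One-whisker case.} Let $\{v,w\}$ be the whisker with $w$ the free vertex. I would run the same algorithm on $G$ starting from $F_1=\{v,w\}$, so that the whisker is the root block and $\{v,w\}\in\PP$. Every other end block is then a non-whisker leaf, so the argument above applies verbatim: the only block that could have been a leaf of size $2$ is the root, and the root is added in full. So again $\PP$ covers $V(G)$. Alternatively, one can run the algorithm on $G\setminus\{w\}$ when that graph is whisker-free and then fix $w$ by the $|C|\ge 3$ swap. However, $G\setminus\{w\}$ may acquire a new whisker at $v$, so rooting at the whisker is the cleaner route.

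The main obstacle is making the coverage argument for Proposition \ref{pro:algopartblockgraph} rigorous. In particular, one must check that a block with $|F'|=1$ really has a child through its uncovered vertex that is still unvisited, using that blocks meet only in cut vertices and that the block–cutpoint tree has no cycles. One must also handle the degenerate cases $G=K_1$ and $G=K_2$, where the statement follows directly from Lemma \ref{lem:geq} and Corollary \ref{cor:K2}.
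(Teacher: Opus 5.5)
Your argument is correct and is the paper's proof. In both, you run the algorithm of Proposition \ref{pro:algopartblockgraph}, rooted at the whisker when there is one, to get a clique matching covering all of $V(G)$, so $b(G)=n$, and you conclude with Proposition \ref{pro:lowupbound}. Your coverage argument supplies the justification the paper leaves implicit: a skipped block $\{c,u\}$ forces $u$ to be a cut vertex, and the first child block through $u$ is then added in full.

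One correction concerns $G=K_1$: it does not follow from Lemma \ref{lem:geq}. There $J_G=0$ and $x_1^{k-1}y_1^{k-1}\notin\mm^{[k]}$, so $c_{K_1}(\mm)=2\neq n$, as Lemma \ref{lem:independentset} also shows. The statement, like the paper's proof (whose algorithm returns the empty matching there), tacitly assumes $G$ has at least one edge.
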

\begin{proof}
	If $G$ is whisker-free we apply Proposition \ref{pro:algopartblockgraph}, obtaining a clique matching of $G$. If it has a unique whisker with vertices $\{u,v\}$ we apply the Algorithm of the Proposition \ref{pro:algopartblockgraph}, with $F_1=\{u,v\}$, obtaining a clique matching, too.
	In both cases by Proposition \ref{pro:lowupbound} we obtain $b(G)= n\leq \cmm\leq 2n-n$, and the assertion follows. 
\end{proof}
\begin{remark}\label{rem:Lmax}
Given a block graph $G$ that is not clique-partitionable, we can take an L-maximal CI-matching $P=(A_{P},B_P)$, with $A_P \neq \varnothing$ such that: 
\begin{enumerate}
\item any free vertex of the graph is in $V(P)$, in particular if a vertex $v$ is adjacent to free vertices, then $v \notin A_P$;
\item if $v\notin V(P)$, then there exist $w,z \in N(v)$ such that $w \in A_P$ and $z \in V(P)$. In particular, if $N(v) \cap A_P = \{w\}$, then we take the CI-matching $P'=(A_{P'}, B_{P'})$ with $A_{P'}=A_{P}\setminus \{w\}$ and $B_{P'}=B_{P}\cup \{\{v,w\}\}$.
\end{enumerate}
\end{remark}
\begin{proposition}\label{pro:ci_bG}
Let $G$ be a block graph and let $P$ be an L-maximal CI-matching of $G$. Then, there exists a clique matching $Q$ of $G$ of cardinality $b(G)$, such that
\begin{itemize}
\item $V(B_P) \subseteq V(Q)$;
\item $V(G) \setminus V(Q) \subseteq A_P$.
\end{itemize}
\end{proposition}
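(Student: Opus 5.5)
We fix an L-maximal CI-matching $P=(A_P,B_P)$ and, among all clique matchings $Q$ of maximum size $|V(Q)|=b(G)$, we pick one that maximizes $|V(Q)\cap V(B_P)|$. Equivalently, we start from the clique matching $B_P$ and enlarge it with an augmenting procedure, in the spirit of Remark \ref{rmk:alg} and of the augmenting-path arguments for matchings. The two properties are then proved separately. $V(B_P)\subseteq V(Q)$ will come from an exchange argument. $V(G)\setminus V(Q)\subseteq A_P$ will follow from L-maximality of $P$ together with the block structure.

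For the first property, suppose $u\in V(B_P)\setminus V(Q)$, and let $C\in B_P$ be the clique containing $u$. Every vertex of $C\setminus\{u\}$ that lies in $V(Q)$ belongs to some clique of $Q$. In a block graph, two cliques sharing two vertices lie in the same block, and distinct blocks meet in at most one cutpoint. So $C$ meets each clique of $Q$ inside one block, and the cliques of $Q$ meeting $C$ are either contained in $C$'s block or hit it in a single cutpoint. We then perform the exchange: replace the cliques $D$ of $Q$ that meet $C$ by $C$ together with the pieces $D\setminus C$ of size $\ge 2$. Cliques inside the same block get merged or split within that block. Vertices $w$ left alone after this are reattached by pairing them with a neighbour not yet covered, walking along the tree of blocks.

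The count $|V(Q)|$ does not decrease. Adding $C$ gains $u$ and loses at most one isolated vertex per block hit in a single cutpoint, and a case check bounds this loss by the gain. The overlap $|V(Q)\cap V(B_P)|$, however, strictly increases, which contradicts the choice of $Q$. I expect this case analysis to be the main obstacle. The delicate situation is when $C$ is an edge and the lost vertices are free vertices of whiskers. There I would use Remark \ref{rem:Lmax}: we may assume free vertices lie in $V(P)$, and a neighbour of a free vertex is not in $A_P$. This lets the alternating-path argument from the whisker lemma preceding Lemma \ref{lem:LRmax} be reused: an alternating chain ending at a free vertex or on a path gives a CI-matching with larger $2a+b$, contradicting L-maximality.

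For the second property, suppose $z\notin V(Q)$ and $z\notin A_P$. By the first property $z\notin V(B_P)$, so $z\in F=V(G)\setminus V(P)$. By maximality of $Q$, every neighbour of $z$ is in $V(Q)$, since otherwise we could add an edge. Since $P$ is L-maximal, $z$ has a neighbour $w\in A_P$; otherwise $z$ could be added to $A_P$. Normalizing by Remark \ref{rem:Lmax}(2), we may moreover assume $|N(z)\cap A_P|\ge 2$. Now $w\in V(Q)$ lies in a clique $D$ of $Q$. If $|D|\ge3$, moving $w$ out of $D$ and adding the edge $\{z,w\}$ enlarges $Q$, a contradiction. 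If $D=\{w,w'\}$, we follow an alternating path $z,w,w',\dots$ in the tree of blocks. Because $G$ is a block graph, this path is unique and cannot close a cycle outside a block. It must end either at an uncovered vertex, giving an augmenting path for $Q$ and contradicting $|V(Q)|=b(G)$, or at a configuration contradicting L-maximality of $P$, by the same parity argument as in the whisker lemma. Hence $V(G)\setminus V(Q)\subseteq A_P$, and $Q$ has both required properties.
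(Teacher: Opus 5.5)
Your proposal has genuine gaps. The two steps that carry all the weight are asserted rather than proved (``a case check bounds this loss by the gain'', ``the same parity argument as in the whisker lemma''), and the exchange you describe for the first property actually fails. Take the block graph formed by a triangle $C=\{u,c_1,c_2\}$ with pendant paths $c_1w_1x_1y_1$ and $c_2w_2x_2y_2$. Here $b(G)=8$, and $Q=\{\{c_1,w_1\},\{x_1,y_1\},\{c_2,w_2\},\{x_2,y_2\}\}$ is a maximum clique matching that misses $u$. Replacing the cliques of $Q$ that meet $C$ by $C$ gains $u$ but drops both $w_1$ and $w_2$. Neither $w_i$ has an uncovered neighbour to be reattached to, so $|V(Q)|$ falls to $7$. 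A clique of $B_P$ can have several cutpoints matched outward by edges of $Q$, so the loss is not bounded by the gain. The correct repair here (use $\{u,c_1\},\{w_1,x_1\}$ and give up $y_1$) is an alternating-path move, not your swap. This particular $C$ cannot lie in an L-maximal $P$: the independent set $\{u,w_1,w_2,y_1,y_2\}$ gives $2a+b=10$, while any CI-matching containing $C$ gives at most $7$. But your argument uses L-maximality only for whiskers, so it gives no reason why such configurations cannot occur in general.

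In the second part, ``$z$ has a neighbour in $A_P$, since otherwise $z$ could be added to $A_P$'' is not a valid inference. The vertex $z$ can be added to $A_P$ only if it also has no neighbour in $V(B_P)$. A vertex of $F$ adjacent to exactly one endpoint $c$ of an edge $\{c,c'\}\in B_P$ is not excluded by this, since swapping $\{c,c'\}$ for $\{c,z\}$ gives an equivalent CI-matching. Excluding it needs a separate chain argument through $B_P$.

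The alternating path $z,w,w',\dots$ is also not unique: after your normalization $z$ has at least two neighbours in $A_P$, and $w'$ may have many neighbours. It can also close odd cycles inside blocks, and nothing you write shows that it must end at an uncovered vertex or at a configuration violating L-maximality. The typical failure is a dead end where every neighbour of the current vertex is already covered by $Q$. Converting that into a CI-matching with larger $2a+b$ requires a global count (in the bipartite case this is exactly K\"onig's theorem). The single-chain parity argument of the whisker lemma does not provide it.

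The paper avoids all of this by induction on the number of cutpoints. It removes a cutpoint $v$ all of whose blocks except one are leaf blocks, splits into the cases $|N(v)\cap A_P|\ge 2$, $=1$, $=0$, and extends the clique matching obtained for $G\setminus v$ or $G\setminus\{v,w\}$. To complete your global route you would need two things: a real augmenting-path lemma for clique matchings in block graphs, and an explicit construction of an improved CI-matching at a dead end.
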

\begin{proof}
We proceed by induction on the number $r$ of cutpoints of $G$. \\ If $r=0$, $G$ is a complete graph, namely $P=Q$ is a clique cover,  and the assertion is true. \\ If $r=1$, the assertion follows from the proof of Proposition \ref{prop:BS}.  \\
If $r>1$, then we take a cutpoint $v$ that disconnects only one non-complete component. We recall that the  L-maximal CI-matching  is $P=(A_P,B_P)$. If $A_P=\varnothing$, then the assertion follows, since $P$ is a clique matching itself. Let $A_P \neq \varnothing$. By Remark \ref{rem:Lmax}.(1) we have $v \notin A_P$ and we distinguish three cases up to equivalence of CI-matchings: 
\begin{enumerate}
\item $|N(v) \cap A_P| \geq 2$;
\item $|N(v) \cap A_P| = 1$;
\item $|N(v) \cap A_P| = 0$.
\end{enumerate}

In case (1), we have that $v \notin B_P$ by construction, and hence $P$ is a CI-matching of $G\setminus v$. Since $G\setminus v$ is a block graph with $r-1$ cutpoints, by inductive hypothesis there exists $Q'$ maximal clique matching of $G\setminus v$ such that  $V(B_P) \subseteq V(Q')$, 
$V(G) \setminus V(Q') \subseteq A_P$. Since $|N(v) \cap A_P| \geq 2$ and  $v$ disconnects only one non-complete component, then there is a whisker $\{v,w\} \in E(G)$, hence 
$Q=Q' \cup \{\{v,w\}\}$ is a maximal clique matching of $G$ that satisfies the desired properties. \\
In case (2),  if $w$ is the unique element of $ N(v) \cap A_P$, then from Remark \ref{rem:Lmax}.(2) we can take $P$ such that $\{v,w\} \in B_P$.
We observe that $P'= (A_{P}, B_{P}\setminus \{v,w\})$ is a maximal CI-matching of $G\setminus \{v,w\}$. From the inductive hypothesis there exists $Q'$ maximal clique matching of $G\setminus \{v,w\}$ such that  $V(B_{P}) \subseteq V(Q')$ and $V(G) \setminus V(Q') \subseteq A_{P}$. Therefore $Q=Q' \cup \{v,w\}$ is a maximal clique matching of $G$ that satisfies the desired properties.\\
In case (3), observe that $v \in V(Q) \subset V(B_P)$ for some clique $Q$ and $|V(Q)| \geq 3$. In fact, if $\{v,w\} \in B_P$ for some $w$, then the CI-matching $P'=(A_P\cup \{w\},B_P \setminus \{v,w\})$ satisfies condition (2).
We take $H=K_m$ such that $v \in V(H)$ and we take $\hat{H}=H \setminus v$. We observe that $P'= (A_P,B_P\setminus H \cup \hat{H})$ is a maximal CI-matching of $G\setminus v$. From the inductive hypothesis there exists $Q'$ maximal clique matching of $G\setminus v$ such that  $V(B_P) \subseteq V(Q')$
$V(G) \setminus V(Q') \subseteq A_P$. Therefore $Q=Q'\setminus \hat{H} \cup H$ is a maximal clique matching of $G$ that satisfies the desired properties. \\

\end{proof}
\begin{theorem}\label{the:blockgraph}
Let $G$ be a block graph. Then 
\[
\cmm=2n-b(G).
\]	
\end{theorem}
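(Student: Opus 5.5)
The plan is to squeeze $\cmm$ between the two bounds of Proposition \ref{pro:lowupbound}, using Proposition \ref{pro:ci_bG} to show that they coincide for block graphs. The upper bound $\cmm\leq 2n-b(G)$ holds for every graph: apply Proposition \ref{pro:lowupbound} to the CI-matching $P=Q$, where $Q$ is a clique matching with $|V(Q)|=b(G)$. So everything rests on the lower bound $\cmm\geq 2n-b(G)$. For this we exhibit a CI-matching $P$ with $2a_P+b_P=2n-b(G)$, and then invoke the left inequality of \eqref{eq:ci-bound}.

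First, fix an $L$-maximal CI-matching $P=(A_P,B_P)$ of $G$. If $G$ is disconnected, we may argue componentwise. Lemma \ref{lem:tp} makes both $\cmm$ and $2n-b(G)$ additive over connected components, so we may assume $G$ is connected. By Proposition \ref{pro:ci_bG} there is a clique matching $Q$ with $|V(Q)|=b(G)$, $V(B_P)\subseteq V(Q)$ and $V(G)\setminus V(Q)\subseteq A_P$. Set $A'=V(G)\setminus V(Q)$ and $B'=Q$. This pair plays the role of the $(A',B')$ in Remark \ref{rmk:alg}: $A'$ is independent (it lies inside $A_P$), $B'$ is a clique matching containing the vertices of $B_P$, and $n=a'+b'$ with $b'=b(G)$. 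Hence $2n-b(G)=2a'+b'$.

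Next, I would show $2a_P+b_P=2a'+b'$ by repeating the counting in Proposition \ref{prop:eq}, $(1)\Rightarrow(2)$. Because $A'\subseteq A_P$ and $V(B_P)\subseteq V(Q)$, the vertices of $Q$ split into $V(B_P)$, the set $A_P\setminus A'$, and the set $F=V(G)\setminus V(P)$. So $b'=b_P+(a_P-a')+f$ and $n=a_P+b_P+f$. Substituting, $2a'+b'=2a_P+b_P$ holds exactly when $f=a_P-a'$, i.e. when $|F|=|A_P\setminus A'|$.

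\textbf{Main obstacle.} This equality $|F|=|A_P\setminus A'|$ is the step I expect to be hardest.

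For ``$\le$'', I would argue through $L$-maximality. Suppose we had $f>a_P-a'$. Then $2a'+b'>2a_P+b_P$. Since $(A',Q)$ is itself a CI-matching, this contradicts the $L$-maximality of $P$.

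For ``$\ge$'', we need $2a'+b'\ge 2a_P+b_P$. This cannot come from a general principle, because it is exactly what fails for $C_5$. So it must use the block structure. I would first try to extract it from the inductive construction in Proposition \ref{pro:ci_bG}, strengthening that proposition's induction so that $Q$ also satisfies $2a'+b'=2a_P+b_P$. In each of cases (1)--(3), $Q$ is obtained from $Q'$ by adding an edge $\{v,w\}$ with $w\in A_P$, or by adding $v$ itself, which lies in $V(B_P)$. In the first situation $b'$ goes up by $2$ while $a'$ goes down by $1$. In the second, $b'$ and $b_P$ each go up by $1$. Either way, the invariant $2a'+b'=2a_P+b_P$ is preserved through the induction. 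Combining this with the upper bound gives $\cmm=2n-b(G)$.
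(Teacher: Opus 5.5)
Your route is the paper's own: take an $L$-maximal $P$, get $Q$ from Proposition~\ref{pro:ci_bG}, then count as in Proposition~\ref{prop:eq}. You are right that the count in Proposition~\ref{prop:eq}, $(1)\Rightarrow(2)$, uses the special shape of $B'$ from Remark~\ref{rmk:alg}: $B_P\subseteq B'$ clique by clique, and every new clique is an edge from $A_P$ to $F$. The $Q$ of Proposition~\ref{pro:ci_bG} need not have this shape, and the paper glosses over this when it simply quotes Proposition~\ref{prop:eq}. Your reduction to $|F|=|A_P\setminus A'|$ is correct.

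The gap is in how you then treat this equality. You have the two directions reversed, and your argument for ``$\le$'' is false.
\begin{itemize}
\item $(A',Q)$ is not a CI-matching. Since $V(G)=A'\sqcup V(Q)$ and $G$ is connected, whenever $A'\neq\emptyset$ and $Q\neq\emptyset$ some vertex of $A'$ is adjacent to a vertex of $V(Q)$, which violates Definition~\ref{def:CI}.
\item No argument using only $L$-maximality and the two containments can give ``$\le$''. In $C_5$, take the $L$-maximal $P=(\{1\},\{\{3,4\}\})$ and the maximum clique matching $Q=\{\{2,3\},\{4,5\}\}$. Then $V(B_P)\subseteq V(Q)$ and $A'=\{1\}\subseteq A_P$, yet $f=2>0=a_P-a'$ and $2a'+b'=6>4=2a_P+b_P$.
\item Conversely, ``$\ge$'' (i.e.\ $2a'+b'\ge 2a_P+b_P$) holds for every graph. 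It is just $2a_P+b_P\leq \cmm\leq 2n-b(G)$ from Proposition~\ref{pro:lowupbound}. Combinatorially: the clique of $Q$ through a vertex of $A_P\setminus A'$ contains no other vertex of $A_P$ and none of $V(B_P)$, so it contains a vertex of $F$. This gives an injection $A_P\setminus A'\to F$.
\end{itemize}
So the inequality that genuinely needs the block structure is $f\le a_P-a'$.

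Only your strengthened induction can supply it. Since it carries the full equality $2a'+b'=2a_P+b_P$, it suffices on its own, and you should delete the $L$-maximality step. Two corrections are needed there.
\begin{itemize}
\item In case (2) the graph loses both $v$ and $w$. So $a'$ stays the same while $b'$ and $2a_P+b_P$ each rise by $2$; $a'$ does not drop by $1$. The invariant still survives.
\item In case (3), the step $Q=Q'\setminus\hat H\cup H$ needs $\hat H$ to be a clique of $Q'$. Mere vertex containment $V(B_{P'})\subseteq V(Q')$ does not guarantee this.
\end{itemize}
It is cleaner to carry the shape of Remark~\ref{rmk:alg} as the inductive invariant: $B_P\subseteq Q$ as cliques, and $Q\setminus B_P$ is a set of edges joining $A_P$ to $F$ that covers $F$. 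This is preserved in all three cases; in case (3) this is because $\hat H\in B_{P'}\subseteq Q'$. It implies $b_Q=b_P+2f$, hence $2a'+b'=2a_P+b_P$, which is exactly the situation in which Proposition~\ref{prop:eq} applies as the paper intends. Maximality of $Q$ then comes for free, since $2n-b_Q=2a_P+b_P\le 2n-b(G)$.
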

\begin{proof}
Follows from Proposition \ref{pro:ci_bG} and \ref{prop:eq}.
\end{proof}
\subsection{Graphs with a 2-factor}
In this section, we recover $c_G(\mm)$ for graphs having a 2-factor.

A graph has a $2$-factor if there are disjoint cycles spanning $G$. We remark that this class of graphs contains the Hamiltonian graphs.
\begin{theorem}\label{thm:2fac}
	Let $G$ be a graph on $n$ vertices that has a $2$-factor. Then
	\[
	\cmm=n.
	\]
\end{theorem}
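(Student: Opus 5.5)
The lower bound $\cmm\geq n$ is Lemma \ref{lem:geq}, so only $\cmm\leq n$ needs proof. I would argue exactly as in Proposition \ref{prop:pm}, replacing the perfect matching by the $2$-factor.

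Let $H$ be the spanning subgraph of $G$ whose edges are those of the $2$-factor. Then $H=C_{m_1}\cup\cdots\cup C_{m_t}$ is a disjoint union of cycles with $m_1+\cdots+m_t=n$ and $m_i\geq 3$. Since $V(H)=V(G)$, Lemma \ref{lem:subgraph} applied with $n'=n$ gives
\[
\cmm\leq c_H(\mm).
\]
The polynomial ring splits as a tensor product over the vertex sets of the cycles, and $J_H$ is the sum of the binomial edge ideals of the cycles in disjoint sets of variables. So, iterating Lemma \ref{lem:tp},
\[
c_H(\mm)=\sum_{i=1}^t c_{C_{m_i}}(\mm_i),
\]
where $\mm_i$ is the maximal ideal of the polynomial ring on the variables of the $i$-th cycle. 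By Theorem \ref{the:oddcycles}, each summand equals $m_i$, whatever the parity of $m_i$. Hence $c_H(\mm)=n$, which gives $\cmm\leq n$, and equality follows.

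The only point needing care is the use of Lemma \ref{lem:tp}. It is stated for the $F$-threshold $c^{\mm}(\mm)$ with $p^e$, while $\cmm$ is defined through $v_G(k)$ for arbitrary $k$. The proof of Lemma \ref{lem:tp}, which is a monomial degree-splitting argument, works verbatim with $p^e$ replaced by any $k$, and the paper already uses it this way in Proposition \ref{prop:pm}. Its additivity relies on the equality $v=v_1+v_2$ holding for each $k$, which is fine. Alternatively, I could bypass Lemma \ref{lem:tp}: take a monomial of degree $n(k-1)+1$. By pigeonhole, its restriction to some cycle $C_{m_i}$ has degree at least $m_i(k-1)+1$. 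The proof of Theorem \ref{the:oddcycles} for that cycle, or Theorem \ref{the:bipartite} for even cycles, then shows this restriction lies in $(\mm^{[k]},J_{C_{m_i}})\subseteq(\mm^{[k]},J_G)$. This yields $v_G(k)\leq n(k-1)$ directly.
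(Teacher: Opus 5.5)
Your proposal is correct and is essentially the paper's own proof. You take the spanning union $H$ of cycles, get $\cmm\le c_H(\mm)$ from Lemma \ref{lem:subgraph}, compute $c_H(\mm)=\sum m_i=n$ via Lemma \ref{lem:tp} and Theorem \ref{the:oddcycles}, and get the lower bound from Lemma \ref{lem:geq}. Your pigeonhole alternative is also valid, because the cycle arguments (the odd-cycle proof, and the perfect-matching pairing for even cycles) give $v_{C_m}(k)\le m(k-1)$ for every $k$, not just in the limit.
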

\begin{proof}
Let $H$ be the disjoint union of cycles $C_{i_1},\ldots, C_{i_r}$ spanning $G$. By Theorem \ref{the:oddcycles} we have $c_{C_{i_j}}(\mm)=i_j$ for $j \in 1,\ldots , r$. 
Hence, from Lemma \ref{lem:tp} we have $c_{H}(\mm)=\sum_{j=1}^r i_j =n$, and from  Lemma \ref{lem:subgraph} we have that 
		\[
		\cmm\leq c_H(\mm')=n 
		\]
	 The other inequality is induced by Lemma \ref{lem:geq}.
\end{proof}	

\begin{example}
Let $G$ be the Petersen graph, see Figure \ref{fig:pet}, that is well-known having a $2$-factor. From Theorem \ref{thm:2fac}, we obtain that 
\[
\cmm=n.
\]
We moreover observe that it contains a perfect matching $\{\{i,i+5\} : i \in \{1,\ldots 5\} \}$, hence we get $\cmm=n$ also from  Proposition \ref{prop:pm}.
\begin{figure}[H]
\begin{tikzpicture}[scale=2]

% Outer pentagon vertices (1–5)
\node[circle,fill=black,inner sep=1.5pt] (v1) at (90:1) {};
\node[circle,fill=black,inner sep=1.5pt] (v2) at (18:1) {};
\node[circle,fill=black,inner sep=1.5pt] (v3) at (-54:1) {};
\node[circle,fill=black,inner sep=1.5pt] (v4) at (-126:1) {};
\node[circle,fill=black,inner sep=1.5pt] (v5) at (162:1) {};

% Inner star vertices (6–10)
\node[circle,fill=black,inner sep=1.5pt] (u1) at (90:0.45) {};
\node[circle,fill=black,inner sep=1.5pt] (u2) at (18:0.45) {};
\node[circle,fill=black,inner sep=1.5pt] (u3) at (-54:0.45) {};
\node[circle,fill=black,inner sep=1.5pt] (u4) at (-126:0.45) {};
\node[circle,fill=black,inner sep=1.5pt] (u5) at (162:0.45) {};

% Outer cycle
\draw (v1)--(v2)--(v3)--(v4)--(v5)--(v1);

% Spokes
\draw (v1)--(u1);
\draw (v2)--(u2);
\draw (v3)--(u3);
\draw (v4)--(u4);
\draw (v5)--(u5);

% Inner star
\draw (u1)--(u3)--(u5)--(u2)--(u4)--(u1);

\end{tikzpicture}
\caption{Petersen graph}\label{fig:pet}
\end{figure}

\end{example}

\subsection{Cycles with whiskers}
In this section, we compute our invariant for cycles with whiskers.  We start by the following notation. Given $w_1,\ldots, w_r \in \{1,\ldots,n\}$ we set
\[
S_{w_1w_2\ldots w_r}=K[{x_i,y_i \ : \ i \in \{1,\ldots,n\} \setminus \{w_1,\ldots,w_r\} }]
\].

\begin{lemma}\label{lem:whi}
	Let $G$ be a graph and let $e=\{u,v\}$ be a whisker of $G$. Then 
	\[
	\cmm = c_{G\setminus e}(\mm') +2
	\]
	where $\mm'$ is the maximal ideal of $S_{uv}$.
\end{lemma}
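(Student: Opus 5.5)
The plan is to read $G\setminus e$ as the graph obtained from $G$ by deleting both vertices $u$ and $v$, with $v$ the free vertex of the whisker. This matches the ring $S_{uv}$. Write $G'=G\setminus e$. I will prove the two inequalities separately: the upper bound from results already in the paper, the lower bound by exhibiting explicit monomials.

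\emph{Upper bound.} Let $H$ be the subgraph of $G$ on the full vertex set $V(G)$ whose edges are $E(G')\cup\{\{u,v\}\}$. Thus $H$ is the disjoint union of $G'$ and a copy of $K_2$. Since $V(H)=V(G)$, Lemma \ref{lem:subgraph} (with $n'=n$) gives $\cmm\leq c_H(\mm)$. By Lemma \ref{lem:tp} and Corollary \ref{cor:K2},
\[
c_H(\mm)=c_{G'}(\mm')+c_{K_2}=c_{G'}(\mm')+2 .
\]

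\emph{Lower bound.} It suffices to show $v_G(k)\geq v_{G'}(k)+2(k-1)$ for every $k$, then divide by $k$ and let $k\to\infty$.

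1. Take a monomial $w\in S_{uv}$ of degree $v_{G'}(k)$ with $w\notin(\mm'^{[k]},J_{G'})$. Set $z=w\,(x_vy_v)^{k-1}$. The goal is $z\notin(\mm^{[k]},J_G)$.

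2. Consider the $K$-algebra map $\varphi\colon S\to S_u$ given by $x_u,y_u\mapsto 0$. Every generator $f_{uj}$ of $J_G$ maps to $0$. Every other generator maps to a generator of $J_{G-u}$, and $\mm^{[k]}$ maps into $\mm_u^{[k]}$, where $\mm_u$ is the maximal ideal of $S_u$. Hence $\varphi\bigl((\mm^{[k]},J_G)\bigr)\subseteq(\mm_u^{[k]},J_{G-u})$. Since $z$ does not involve $u$, we have $\varphi(z)=z$, so it is enough to show $z\notin(\mm_u^{[k]},J_{G-u})$.

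3. Because $v$ has degree one with neighbour $u$, the graph $G-u$ is the disjoint union of $G'$ and the isolated vertex $v$. Therefore
\[
S_u/(\mm_u^{[k]},J_{G-u})\cong S_{uv}/(\mm'^{[k]},J_{G'})\otimes_K K[x_v,y_v]/(x_v^k,y_v^k).
\]
The image of $z$ is $\bar w\otimes\overline{(x_vy_v)^{k-1}}$. This is a tensor product over a field of two nonzero elements, hence nonzero. This gives $z\notin(\mm^{[k]},J_G)$, so $v_G(k)\geq \deg z=v_{G'}(k)+2(k-1)$.

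The main point needing care is step 2: the membership of $z$ in $(\mm^{[k]},J_G)$ must really transfer to the smaller ring. A naive argument using Proposition \ref{pro:groeb} in $G$ itself is awkward when $w$ involves neighbours of $u$. The specialization $x_u=y_u=0$ avoids this issue, since it only uses that $z$ is free of $u$ and that $\varphi$ maps the ideal into the ideal. The nonvanishing of the tensor product is standard and mirrors the proof of Lemma \ref{lem:tp}.
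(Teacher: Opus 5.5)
Your proof is correct. Its overall structure matches the paper's. The upper bound comes from the spanning subgraph $G'\sqcup K_2$ via Lemma \ref{lem:subgraph} and Lemma \ref{lem:tp}; the paper cites Theorem \ref{the:bipartite} for the value $2$ of $K_2$, you cite Corollary \ref{cor:K2}. The lower bound uses the same witness $w\,(x_vy_v)^{k-1}$ on the free vertex.

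The genuine difference is how you certify $z\notin(\mm^{[k]},J_G)$. The paper only says this follows by ``a similar argument'' to Lemma \ref{lem:independentset}, i.e.\ from the support description in Proposition \ref{pro:groeb}: Gr\"obner basis elements outside $\mm^{[k]}$ have connected support containing an edge, and the free vertex is isolated in $G_{V(z)}$. Taken literally, that route needs two points the paper leaves implicit:
\begin{enumerate}
\item $w$ must lie outside the initial ideal of $(\mm'^{[k]},J_{G'})$, not merely outside the ideal. Otherwise $z$ lies in the initial ideal of $(\mm^{[k]},J_G)$ and the method certifies nothing.
\item Gr\"obner basis elements of $(\mm^{[k]},J_G)$ supported on $V(G')$ must be shown to lie in the initial ideal of $(\mm'^{[k]},J_{G'})$. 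This is most easily proved by a specialization like yours.
\end{enumerate}

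Your argument kills the attachment vertex by $x_u=y_u=0$ and then splits $G-u=G'\sqcup\{v\}$ as a tensor product. This buys several things:
\begin{enumerate}
\item It avoids Gr\"obner bases entirely, so it does not depend on the sketched proof of Proposition \ref{pro:groeb}.
\item It works for every $w\notin(\mm'^{[k]},J_{G'})$.
\item It proves the more general monotonicity $v_G(k)\geq v_{G-u}(k)$ for deletion of an arbitrary vertex.
\item The proofs of Lemma \ref{lem:subgraph}, Lemma \ref{lem:tp} and Corollary \ref{cor:K2} all work at each fixed $k$, so together with your upper bound you actually get $v_G(k)=v_{G'}(k)+2(k-1)$ exactly.
\end{enumerate}
What the paper's approach offers in exchange, once completed, is a combinatorial explanation, in terms of initial ideals, of why an isolated free vertex contributes exactly $2$.
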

\begin{proof}
	The disconnected graph $(G\setminus e)\cup E$, with $E$ the graph containing the edge $e$, is a subgraph of $G$, hence from Lemma \ref{lem:subgraph} and Theorem \ref{the:bipartite}  we have $$\cmm \leq c_{G\setminus e}(\mm') +c_E(\mm'')=c_{G\setminus e}(\mm') +2$$ with $\mm''\subset K[x_u,y_u,x_v,y_v]$.
	Conversely, let $k \in \NN$ and let $\lambda$ be a monomial of degree $v_{G\setminus e}(k)$, and $\lambda\notin (\mm'^{[k]},J_{G\setminus e})$.
	
	Assume  $u$ the free vertex of $G$ in $e$, using a similar argument of Lemma \ref{lem:independentset}, we obtain $(x_{u}y_{u})^{k-1} \lambda \notin (\mm^{k},J_G)$ , and hence 
	\[
	v_{G}(k) \geq v_{G\setminus e}(k)+2(k-1).
	\]
	Taking the limit the desired inequality follows.
\end{proof}

If instead of a whisker one considers a complete graphs $K_m$, then we do not obtain an exact formula.
\begin{proposition}\label{lem:cli} 
	Let $G$ be a graph, let $K_m$ be a complete graph on vertices $\{v_1,\ldots,v_m\}$ with only one non-free vertex $v_m$ and $m\geq3$, and let $c=c_{G\setminus K_m}(\mm')$ where $\mm'$ is the maximal ideal of $S_{v_1v_2\ldots v_m}$.Then 
	\[
	\cmm \in \{ c+m-1,c+m\}.
	\]
\end{proposition}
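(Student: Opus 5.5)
We must show $c+m-1\le \cmm\le c+m$.

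\emph{Upper bound.} Let $H$ be the disjoint union of $G\setminus K_m$ (on the vertices other than $v_1,\ldots,v_m$) and the complete graph $K_m$ on $\{v_1,\ldots,v_m\}$. Then $H$ is a spanning subgraph of $G$. By Lemma \ref{lem:subgraph} and Lemma \ref{lem:tp},
\[
\cmm\le c_H(\mm)=c+c_{K_m}(\mm'').
\]
Since $K_m$ is itself a clique, it is a clique matching of itself, so Proposition \ref{pro:lowupbound} gives $m\le c_{K_m}(\mm'')\le 2m-m=m$. Thus $c_{K_m}(\mm'')=m$ and $\cmm\le c+m$.

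\emph{Lower bound.} Mimic the second half of Lemma \ref{lem:whi}. Fix $k$ and take a monomial $\lambda$ in the variables of $S_{v_1\ldots v_m}$ of degree $v_{G\setminus K_m}(k)$ with $\lambda\notin(\mm'^{[k]},J_{G\setminus K_m})$. Choose the extension
\[
\mu=\prod_{i=1}^{m-1}x_{v_i}^{k-1},
\]
avoiding $v_m$ entirely, and consider $u=\lambda\mu$ of degree $v_{G\setminus K_m}(k)+(m-1)(k-1)$. We claim $u\notin(\mm^{[k]},J_G)$. Taking limits then gives $\cmm\ge c+m-1$.

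\emph{Proving the claim.} The cleanest route is to avoid Gröbner bases and argue through a grading. The generators $f_{ij}$ with $i,j\in\{v_1,\ldots,v_{m-1}\}$ are killed by the specialization $y_{v_i}\mapsto 0$ for $i<m$, since each such term contains some $y_{v_i}$. The same specialization sends the remaining $K_m$-edges $f_{v_iv_m}$ to $x_{v_i}y_{v_m}$. To handle these, also set $x_{v_m},y_{v_m}\mapsto 0$; this kills $f_{v_iv_m}$ and every edge of $G$ through $v_m$.

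The trouble is that, although $v_m$ is the only non-free vertex of $K_m$, it may also lie in $G\setminus K_m$. In that case setting its variables to $0$ changes $J_{G\setminus K_m}$ and could destroy the property $\lambda\notin(\mm'^{[k]},J_{G\setminus K_m})$.

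So I would instead apply Proposition \ref{pro:groeb} directly. Suppose $u$ is divisible by a monomial $w$ of the reduced Gröbner basis of $(\mm^{[k]},J_G)$, where we may take $w$ to come from the initial terms $x_iy_ju_\pi$ multiplied in, using a term order in which $v_1,\ldots,v_{m-1}$ are the largest indices.

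Then the $x_{v_i}$ with $i<m$ can only occur in $u_\pi$-factors as vertices $>j$. Any admissible path through some $v_i$ with $i<m$ must enter and leave through $v_m$, which contradicts minimality condition (3). Hence the divisibility factors through $\lambda$, giving a contradiction.

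This last step is the main obstacle: making rigorous that the Gröbner elements "see" the pendant clique only through $v_m$. If it resists, the fallback is to choose a different extension $\mu$ tailored to the Gröbner structure, still of degree $(m-1)(k-1)$.
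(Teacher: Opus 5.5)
Your two bounds follow the paper's proof exactly: the upper bound comes from the spanning subgraph $(G\setminus K_m)\cup K_m$ via Lemmas \ref{lem:subgraph} and \ref{lem:tp}, and the lower bound from $u=\lambda\mu$ with $\mu=(x_{v_1}\cdots x_{v_{m-1}})^{k-1}$. (For $c_{K_m}(\mm'')=m$, cite Theorem \ref{thm:2fac}, since $K_m$ is Hamiltonian for $m\ge 3$. The upper bound of Proposition \ref{pro:lowupbound} tacitly uses this very fact, so citing it is circular in substance.)

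The paper simply asserts $u\notin(\mm^{[k]},J_G)$. You rightly see that this is the step with content, but the Gr\"obner argument you settle on does not prove it. The reduced Gr\"obner basis of $(\mm^{[k]},J_G)$ contains monomials of degree $>k$ that are not multiples of any $x_iy_ju_\pi$. Already for a single edge with $\ini(f_{12})=x_1y_2$ one gets $S(x_1^k,f_{12})=x_1^{k-1}x_2y_1$. So excluding admissible paths through $v_1,\dots,v_{m-1}$ says nothing about the monomial part $\mathcal{M}$, and your reduction ``we may take $w$ to come from the initial terms $x_iy_ju_\pi$'' is unjustified. Moreover, a basis element supported on $V(G\setminus K_m)$ that divides $\lambda$ gives a contradiction only if it lies in $(\mm'^{[k]},J_{G\setminus K_m})$, which you do not address.

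The specialization you discarded does close the gap, and the obstruction you raise is not there. By the statement's convention ($\mm'$ is the maximal ideal of $S_{v_1\cdots v_m}$), and by your own upper-bound setup, $G\setminus K_m$ is the induced subgraph on $W=V(G)\setminus\{v_1,\dots,v_m\}$. Hence neither $\lambda$ nor $J_{G\setminus K_m}$ involves $x_{v_m},y_{v_m}$.

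Let $\phi$ be the $K$-algebra endomorphism of $S$ killing $y_{v_1},\dots,y_{v_{m-1}},x_{v_m},y_{v_m}$ and fixing all other variables. Since $v_1,\dots,v_{m-1}$ are free, they have no neighbours in $W$. So every edge of $G$ outside $G\setminus K_m$ is some $\{v_i,v_j\}$ or some $\{v_m,w\}$ with $w\in W$, and both terms of the corresponding $f_{ij}$ contain a killed variable. Therefore $\phi$ sends these binomials to $0$, fixes the generators of $J_{G\setminus K_m}$, and sends each $z^k$ to $z^k$ or $0$. Killing extra generators only shrinks the image, which is harmless.

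Thus $\phi$ maps $(\mm^{[k]},J_G)$ into $L=(\mm'^{[k]},J_{G\setminus K_m},x_{v_1}^k,\dots,x_{v_{m-1}}^k)\subseteq S_{v_1\cdots v_m}[x_{v_1},\dots,x_{v_{m-1}}]$. Since $\phi(u)=u$, membership $u\in(\mm^{[k]},J_G)$ would give $u\in L$. Grade by the exponent vector in $x_{v_1},\dots,x_{v_{m-1}}$. Then $L$ is $\mathbb{N}^{m-1}$-homogeneous, and its component of degree $(k-1,\dots,k-1)$ is $(\mm'^{[k]},J_{G\setminus K_m})\,\mu$. So $u=\lambda\mu\in L$ forces $\lambda\in(\mm'^{[k]},J_{G\setminus K_m})$, a contradiction.

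Finally, you and the paper both prove only $c+m-1\le\cmm\le c+m$. Concluding $\cmm\in\{c+m-1,c+m\}$ additionally needs $\cmm-c\in\mathbb{Z}$, which is not established anywhere in the paper.
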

\begin{proof}
	We prove 
	\[
	c+m-1\leq \cmm \leq c+m
	\]
	The disconnected graph $G\setminus K_m \cup K_m$ is a subgraph of $G$, hence from Lemma \ref{lem:subgraph} we have $\cmm \leq c +m$.
	Conversely, let $k \in \NN$ and let $\lambda$ be a monomial of degree $v_{G\setminus K_m}(k)$. Then, $(x_{v_1}\cdots x_{v_{m-1}})^{k-1} \lambda \notin (\mm^{k},J_G)$, and hence 
	\[
	v_{G}(k) \geq v_{G\setminus K_m}(k)+(m-1)(k-1)
	\]
	and taking the limit the desired inequality follows.
\end{proof}

\begin{theorem}\label{thm:2cycleswithwhiskers}
	Let $G$ be a cycle with at least one whisker. Then
	\[
	\cmm=2n - b(G).
	\]
\end{theorem}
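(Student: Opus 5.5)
The plan is to prove the two inequalities separately. The upper bound $\cmm\leq 2n-b(G)$ is already available from Proposition \ref{pro:lowupbound}, applied to a clique matching realizing $b(G)$. So the whole task is to find a lower bound $\cmm\geq 2n-b(G)$, and for this I would peel off whiskers with Lemma \ref{lem:whi}. Since $G$ is a cycle $C$ with whiskers, every clique of $G$ is an edge; $b(G)=2\MM(G)$ and clique matchings are just matchings. Let $C=C_m$ be the cycle and let the whiskers be $\{u_1,w_1\},\ldots,\{u_s,w_s\}$, with $u_i\in C$ and $w_i$ free, $s\geq 1$. Then $n=m+s$.

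First reduction. Lemma \ref{lem:whi} removes one whisker $e=\{u,w\}$ together with both endpoints, and gives $\cmm=c_{G\setminus e}(\mm')+2$. Here $G\setminus e$ means the graph on $V(G)\setminus\{u,w\}$. Deleting the cycle vertex $u$ turns $C_m$ into the path $C_m\setminus u$. Hence $G\setminus e$ is a path on $m-1$ vertices carrying the remaining whiskers, and possibly some isolated vertices: the free ends of other whiskers attached at $u$, if multiple whiskers at a vertex are allowed. Such a graph is a forest, hence bipartite.

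Second step. By Theorem \ref{the:bipartite} and K\"onig's theorem, $c_{G\setminus e}(\mm')=2(n'-\MM(G\setminus e))$ with $n'=n-2$. This gives
\[
\cmm = 2(n-2)-2\MM(G\setminus e)+2=2n-2-2\MM(G\setminus e).
\]
It then suffices to show the matching identity $\MM(G)=\MM(G\setminus e)+1$ for a well-chosen whisker $e$. That yields $\cmm=2n-2\MM(G)=2n-b(G)$, and the upper bound from Proposition \ref{pro:lowupbound} is then consistent automatically.

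Main obstacle: the combinatorial identity. The inequality $\MM(G)\geq \MM(G\setminus e)+1$ is trivial, since we can add $e$ to any matching of $G\setminus e$. For the reverse, take a maximum matching $N$ of $G$. If $e\in N$, removing it gives a matching of $G\setminus e$ of size $|N|-1$. Otherwise the free vertex $w$ is unmatched, so $u$ is matched by $N$ to some neighbour $z$. Replacing $\{u,z\}$ by $\{u,w\}$ gives another maximum matching containing $e$, and we conclude as before. This standard exchange argument shows that every whisker lies in some maximum matching, so $\MM(G\setminus e)=\MM(G)-1$ for any whisker $e$. The only care needed is to check that the graph produced by Lemma \ref{lem:whi} is indeed the one on $V(G)\setminus\{u,w\}$, so that bipartiteness really holds. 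This is exactly where the hypothesis of at least one whisker is used: it breaks the (possibly odd) cycle.
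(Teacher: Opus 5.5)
Your argument is correct and has the same skeleton as the paper's proof. You peel off one whisker with Lemma \ref{lem:whi}, evaluate the invariant of the remaining forest using an earlier theorem, and use an exchange argument to show that the whisker lies in an optimal (clique) matching.

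The difference is in the middle step:
\begin{itemize}
\item The paper applies Theorem \ref{the:blockgraph} to $H=G\setminus e$, and exchanges directly inside clique matchings to get $b(G)=b(H)+2$.
\item You apply Theorem \ref{the:bipartite} together with K\"onig's theorem, and work with ordinary matchings to prove $\MM(G\setminus e)=\MM(G)-1$.
\end{itemize}
Your route rests on the short, self-contained bipartite result. The paper's route rests on Theorem \ref{the:blockgraph}, whose proof goes through the case analysis of Proposition \ref{pro:ci_bG}. Your route also needs no connectivity, so it covers the case you flag: several whiskers at one cycle vertex, where $G\setminus e$ is a forest with isolated vertices rather than the tree the paper assumes.

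The price is the extra identity $b(G)=2\MM(G)$. Your justification for it, ``every clique of $G$ is an edge'', is false when the cycle is $C_3$. The identity still holds in that case, but you need to say why:
\begin{itemize}
\item Suppose a clique matching $Q$ uses the triangle. Pick a triangle vertex $u$ carrying a whisker $\{u,w\}$; one exists by hypothesis.
\item Then $w$ is not covered by $Q$.
\item Replacing the triangle $\{u,a,b\}$ by $\{u,w\}$ and $\{a,b\}$ covers one more vertex.
\end{itemize}
So a maximum clique matching consists of edges. The paper's exchange step has the same blind spot: it assumes the non-free endpoint of $e$ is covered by an edge of $Q$.

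Finally, the appeal to Proposition \ref{pro:lowupbound} is superfluous, since Lemma \ref{lem:whi} already gives an equality and so supplies both bounds at once.
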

\begin{proof}
	
	Let $e$ be a whisker of $G$ and $H=G\setminus e$. By Lemma \ref{lem:whi} we have
	\[
	\cmm = c_{H}(\mm') +2.
	\]
	Since $H$ is a is a block graph, in particular a tree, by Theorem \ref{the:blockgraph} we have
	$c_{H}(\mm')=2(n-2)-b(H)$, namely
	\[
	\cmm = 2(n-2)-b(H)+2=2(n-2)+b(H)+4-2=2n-(b(H)+2).
	\]
	Now, we prove that $b(G)=b(H)+2$. In fact, let $P$ be a clique matching of $H$ of cardinality $b(H)$, then adding to $P$ the edge $e$ we obtain a clique matching $P'$ of $G$ of cardinality $b(H)+2$.
	Moreover, assuming that there exists a clique matching $Q$ of cardinality bigger than $b(H)+2$, we claim that there exists one of the same cardinality containing the edge $e$. The partition containing $e$  induces a partition $Q'$ on $H$, obtained removing  the edge $e$ from $Q$, of cardinality greater than $b(H)$ that is absurd. 
	If $e \in Q$ there is nothing to prove. Otherwise,  let $e=\{v,w\}$ with $w$ isolated. There exists $z \in N(v)$ such that $\{v,z\} \in Q$. We take $Q'=Q\setminus \{\{v,z\}\} \cup \{e\}$. The claim follows.
	   
\end{proof}

\end{document}